\def\F{{\mathbb F}}
\def\G{{\mathbb G}}
\def\P{{\mathbb P}}
\def\Z{{\mathbb Z}}
\newtheorem{theorem}{Theorem}[section]
\newtheorem{lemma}[theorem]{Lemma}
\newtheorem{remark}{Remark}
\newtheorem{ex}{Example}
\newtheorem{proposition}[theorem]{Proposition}
\newtheorem{corollary}[theorem]{Corollary}
\title[Secant Loci and Simple Linear Projections]{On secant loci and simple linear projections of some projective varieties}
\author[E. Park]{Euisung Park}
\address {Euisung Park : Department of Mathematics, Korea University, Seoul 136-701, Republic of Korea} \email{euisungpark@korea.ac.kr}
\thanks{Mathematics Subject Classification (2000): 14N15, 51N35}
\thanks{``This work was supported by the Korea Research Foundation Grant funded by the Korean Government(MOEHRD)." (KRF- 2006-214-C00002)}
\begin{document}

\thispagestyle{empty} \maketitle

\begin{abstract}
In this paper, we study how simple linear projections of some projective
varieties behave when the projection center runs through the ambient space. More
precisely, let $X \subset \P^r$ be a projective variety satisfying Green-Lazarsfeld's property
$N_p$ for some $p \geq 2$, $q \in \P^r$ a closed point outside of $X$, and $X_q := \pi_q (X) \subset \P^{r-1}$ the projected image of $X$ from $q$. First, it is shown that the secant locus $\Sigma_q (X)$ of $X$
with respect to $q$, i.e. the set of all points on $X$ spanning secant lines passing through
$q$, is either empty or a quadric in a subspace of $\P^r$. This implies that the finite
morphism $\pi_q  : X \rightarrow X_q$ is birational. Our main result is that cohomological and
local properties of $X_q$ are precisely determined by $\Sigma_q (X)$. To complete this result,
the next step should be to classify all possible secant loci and to decompose the
ambient space via the classification of secant loci. We obtain such a decomposition
for Veronese embeddings and Segre embeddings. Also as an application of the main
result, we study cohomological properties of low degree varieties.
\end{abstract}

\tableofcontents \setcounter{page}{1}

\section{Introduction}
Let $X \subset \P^r$ be an $n$-dimensional nondegenerate irreducible projective variety over an algebraically closed field $K$. For a closed point $q \in \P^r$ outside of $X$, consider the subvariety
\begin{equation*}
X_q = \pi_q (X) \subset \P^{r-1}
\end{equation*}
where $\pi_q : X \rightarrow \P^{r-1}$ is the linear projection of $X$ from $q$. In this situation, it is
necessary to understand how the relative location of $q$ with respect to $X$ effects $X_q$, or
how the geometric and algebraic properties of $X_q$ behave when the projection center
runs through the ambient space. In relation to this problem, a natural geometric
approach is to investigate the \textit{secant locus} $\Sigma_q (X)$ of $X$ with respect to $q$, i.e. the set of all points on $X$ spanning secant lines passing through $q$. Here $\Sigma_q (X)$ is also called
the entry locus of $X$ with respect to $q$ in the sense of \cite{AR2}.

The main goal of this paper is to study the problem outlined above for a class of
varieties with a simple syzygetic structure. More precisely, the problem under consideration is \\

\begin{enumerate}
\item[$(\star)$] Let $X \subset \P^r$ be a nondegenerate irreducible projective variety satisfying Green-
Lazarsfeld's property $N_p$ for some $p \geq 2$. For a closed point $q \in \P^r$ outside
of $X$, describe the relation between the position of $q$ with respect to $X$ and
cohomological and local properties of $X_q := \pi_q (X) \subset \P^{r-1}$. \\
\end{enumerate}

The cohomological properties include the calculation of the Hilbert function, Hilbert
polynomial, Hartshorne-Rao module, Castelnuovo-Mumford regularity, arithmetic
depth and projective dimension, etc, which are governed by the cohomology groups
$H^i (\P^{r-1}, \mathcal{I}_{X_q} (j))$ for $i \geq 0$ and $j \in \Z$. The local properties include the descriptions of the locus of smooth points, normal points, $S_2$-points and Cohen-Macaulay points.

{\bf Theorem 3.3} shows that if $X \subset \P^r$ satisfies property $N_p$ for some $p \geq 2$, then the cohomological and local properties of $X_q \subset \P^{r-1}$ can be precisely determined by $\Sigma_q (X)$. First, $\Sigma_q (X)$ is shown to be either empty or else a quadric in a subspace of $\P^r$. This implies that the finite morphism $\pi_q : X \rightarrow X_q$ is birational. This reproves a version of a result of Vermeire \cite{V} on the linear system of quadrics through a variety satisfying property $N_2$. With regard to the cohomological aspects, $X_q$ is shown to be $j$-normal for all $j \geq 2$, to satisfy property $N_{3,p}$, and that the Castelnuovo-Mumford regularity of $X_q$ is equal to $\mbox{max} \{ 3, \mbox{Reg}(X)\}$. This extends results of Kwak-Park \cite{KP} and Choi-Kwak-Park \cite{CKP} on the isomorphic projections of a variety satisfying property $N_p$. The most interesting connection between $\Sigma_q (X)$ and $X_q \subset \P^{r-1}$ is illustrated by the following two formulae:
\begin{equation*}
\begin{CD}
h^0 (\P^{r-1},\mathcal{I}_{X_q /\P^{r-1}} (2)) & \quad = \quad & h^0 (\P^r,\mathcal{I}_{X/\P^r} (2))+s-r \\
\mbox{depth}(X_q) & = & \mbox{min}\{\mbox{depth}(X),s+2\}
\end{CD}
\end{equation*}
where $s$ is used to denote the dimension of $\Sigma_q (X)$. The depth formula is proved under the cohomological assumption
\begin{equation*}
H^i (X,\mathcal{O}_{X} (j))=0 \quad \mbox{for $1 \leq i \leq n-1$ and all $j \leq -i$},
\end{equation*}
which holds if $X$ is a smooth variety over an algebraically closed field of characteristic
zero or $X$ is arithmetically Cohen-Macaulay. The birational morphism $\pi_q : X \rightarrow X_q$ fails to be isomorphic exactly along $\mbox{Sing}(\pi_q)$. In {\bf Theorem 3.3.(7) and (8)}, the local
properties of $X_q$ at $\pi_q (x) \in \mbox{Sing}(\pi_q)$ are described comparing with those of $X$ at $x \in \Sigma_q (X)$. {\bf Theorem 3.3} generalizes a result of M. Brodmann and P. Schenzel \cite{BS} on arithmetic properties of a birational projection of a variety of minimal degree from a closed point to a variety satisfying property $N_2$. We give examples showing that these results are sharp in various senses.

Let us recall the following examples of varieties satisfying property $N_2$ to see the wide range of possible applications: Veronese embedding of projective spaces, Segre varieties, varieties of minimal degree, Del Pezzo varieties of degree at least $5$, every linearly normal embedding of Grassmannians, all embeddings given by a sufficiently ample line bundle and embeddings given by suitable adjoint line bundles satisfy property $N_2$.

To complete the result in {\bf Theorem 3.3} for a given variety $X \subset \P^r$ which satisfies property $N_2$, the next step should be to classify all possible secant loci and to decompose the ambient space via the classification of secant loci. More precisely, let
the $s$-th \textit{secant strata} of $X$, denoted $SL_s (X)$, be the set of all closed points $q \in \P^r$ satisfying $\mbox{dim} \Sigma_q (X) = s$. The ambient space is expressed as
\begin{equation*}
\P^r = \bigcup_{-1 \leq s \leq n} SL_s (X),
\end{equation*}
which we call the \textit{secant stratification} of $X \subset \P^r$.

In $\S~ 4$, the basic properties of secant loci are investigated. Also we obtain the secant stratification of Veronese embeddings and Segre varieties, which depends heavily on several intrinsic and extrinsic properties of them.

In $\S~ 5$, we apply our results to the study of cohomological properties of low degree varieties. More precisely, recall that the degree and the codimension of a nondegenerate irreducible projective varieties $X \subset \P^r$ satisfies the relation
\begin{equation*}
\mbox{deg} (X) = \mbox{codim}(X,\P^r)+k \quad \mbox{for some $k \geq 1$}.
\end{equation*}
A classical problem in algebraic geometry is to classify and to find a structure theory of projective varieties having small values of $k$. We reprove Theorem A in \cite{HSV} and Theorem 1.3 in \cite{BS} on cohomological properties of varieties of almost minimal
degree, i.e. varieties whose degree exceeds the codimension by precisely $2$.

\begin{remark}
Let $X \subset \P^r$ be a nondegenerate projection variety and let $q \in \P^r$ be
a closed point. We can consider three kinds of simple linear projections of $X$ with
respect to the location of $q$:\\

\begin{enumerate}
\item[(a)] an isomorphism outer projection of $X$ with $q$ outside of $X$
\item[(b)] a singular outer projection of $X$ with $q$ outside of $X$
\item[(c)] an inner projection of $X$ with $q$ contained in $X$ \\
\end{enumerate}

\noindent Recently, we have been interested in the effect of property $N_p$ of $X \subset \P^r$ to the
algebraic and geometric behavior of the projected image $X_q := \pi_q (X) \subset \P^{r-1}$ when
$q$ runs through the ambient space. In relation to this problem, an answer has been
provided in \cite{CKP} and \cite{KP} when $\pi_q : X \rightarrow \P^{r-1}$ is an embedding and in \cite{CKK} when
$X$ is smooth and $q$ is in $X$ and outside of the union of all lines contained in $X$. In
particular, Theorem 1.1, Proposition 2.3 and Corollary 2.4 in \cite{CKK} imply that the
two formulae in {\bf Theorem 3.3.(2) and (5)} still hold when $X$ is smooth and $q$ is in $X$
and outside of the union of all lines contained in $X$.
\end{remark}

{\bf Acknowledgements.} This paper was started when I was conducting Post Doctorial research at Institute of Mathematics in University of Zurich. I would like to thank Professor Markus Brodmann for many useful discussions and especially for his advise about {\bf Theorem 3.3.$(7)$ and $(8)$}.

\section{Preliminaries}
In this section we recall a few notation, which we use throughout this paper. For a nondegenerate irreducible projective variety $X \subset \P^r$, let $\mathcal{I}_X$ be the sheaf of ideals of $X$ and let $\F_{\bullet}$ be a minimal free resolution of the homogeneous ideal $I_X$ of $X$ over the homogeneous coordinate ring $R$ of $\P^r$:
\begin{equation*}
\F_{\bullet} : 0 \rightarrow F_r \rightarrow \cdots \rightarrow F_i \rightarrow \cdots \rightarrow F_1 \rightarrow I_X \rightarrow 0
\end{equation*}
where $F_i = \bigoplus_{j \in \Z} R(-i-j)^{\beta_{i,j}}$.\\

$(2.1)$ For $j \geq 1$, $X$ is said to be $j$-normal if the natural map
\begin{equation*}
H^0 (\P^r,\mathcal{O}_{\P^r} (j)) \rightarrow H^0 (X,\mathcal{O}_X (j))
\end{equation*}
is surjective, or equivalently, $H^1 (\P^r,\mathcal{I}_X (j))=0$. $X$ is \textit{linearly normal} if it is $1$-normal. $X$ is \textit{projectively normal} if it is $j$-normal for all $j \geq 1$.\\

$(2.2)$ For $p \geq 1$, $X$ is said to satisfy \textit{property $N_p$} if it is projectively normal and $\beta_{i,j}=0$ for $1 \leq i \leq p$ and all $j \geq 2$. Thus when $X$ is projectively normal, it satisfies property $N_1$ if $I_X$ is generated by quadrics. Also for $p \geq 2$, $X$ satisfies property $N_p$ if
it satisfies property $N_1$ and the $k$-th syzygies among the quadrics are generated by linear syzygies for all $1 \leq k \leq p-1$.\\

$(2.3)$ For some $d \geq 2$ and $p \geq 1$, $X$ is said to satisfy property $N_{d,p}$ if $\beta_{i,j}=0$ for $1 \leq i \leq p$ and all $j \geq d$, or equivalently, $I_X$ is generated by forms of degree at most $d$ and the first $(p-1)$ steps of a minimal free resolution of $(I_X)_{\geq d}$ is linear. Thus property $N_{2,p}$ coincides with property $N_p$ without projective normality(cf. \cite{EGHP}).\\

$(2.4)$ We say that $X$ is $m$-regular if $\beta_{i,j}=0$ for all $j \geq m$. It is well-known that $X$ is $m$-regular if and only if $H^i (\P^r,\mathcal{I}_X (m-i))=0$ for all $i \geq 1$(e.g. \cite{EG}). Therefore if $X$ is $m$-regular, then it is $j$-normal for all $j \geq m-1$ and it satisfies property $N_{m,p}$ for all $p \geq 1$. $\mbox{Reg}(X)$ denotes $\mbox{min}\{m \in \Z~|~X \mbox{ is $m$-regular}\}$.\\

$(2.5)$  \textit{The arithmetic depth} of $X$, denoted by $\mbox{depth}(X)$, is defined to be the depth of $R/I_X$ as an $R$-module. It is cohomologically characterized as follows:
\begin{equation*}
\mbox{depth}(X) = \mbox{min} \{ ~ i \geq 1 ~|~ \bigoplus_{j \in
\Z} H^i (\P^r,\mathcal{I}_X (j)) \neq 0 ~ \}
\end{equation*}
Therefore $1 \leq \mbox{depth}(X) \leq \mbox{dim}~X +1$. The length of the minimal free resolution of $R/I_X$ is called \textit{the projective dimension} of $X$ and is denoted by $\mbox{pd}(X)$. By the Auslander-Buchsbaum theorem,
\begin{equation*}
\mbox{pd}(X) = r+1 - \mbox{depth}(X).
\end{equation*}
We say that $X$ is \textit{arithmetically Cohen-Macaulay} if $\mbox{depth}(X)=\mbox{dim}~X +1$, or equivalently, $\mbox{pd}(X)=\mbox{codim}(X,\P^r)$. Note that if $X \subset \P^r$ is not linearly normal then $\mbox{depth}(X)=1$ and $\mbox{pd}(X)=r$. One can find the details in \cite{E}.\\

$(2.6)$ Let $\mathcal{L}=\mathcal{O}_X (1)$ and let $n=\mbox{dim}~X$. The function
\begin{equation*}
k \quad \longmapsto \quad \chi_{(X,\mathcal{L})} (k) := \sum_{i=0} ^{n} (-1)^i h^i (X,\mathcal{L}^{\otimes k})
\end{equation*}
is a polynomial of degree $n$, the so called \textit{Hilbert polynomial} of $(X,\mathcal{L})$. There are uniquely determined integers $\chi_i (X,\mathcal{L})$, $i=0,1,\cdots,n$ such that
\begin{equation*}
\chi_{(X,\mathcal{L})} (k) = \sum_{i=0} ^{n} \chi_i (X,\mathcal{L}) {{n+i-1} \choose {i}}.
\end{equation*}
The degree of $X$ is equal to $\chi_n (X,\mathcal{L})$. The $\Delta$-genus and the sectional genus of $(X,\mathcal{L})$ are defined respectively by the formulas
\begin{equation*}
\begin{CD}
& \Delta (X,\mathcal{L}) & \quad = \quad  & n + \chi_n (X,\mathcal{L}) - h^0 (X,\mathcal{L}); \\
 &g (X,\mathcal{L})      & \quad = \quad  & 1 - \chi_{n-1} (X,\mathcal{L}).
\end{CD}
\end{equation*}
Note that the sectional genus $g (X,\mathcal{L})$ is equal to the arithmetic genus of a generic linear curve section of $X \subset \P^r$.

\section{The Main Theorem}
Throughout this section, $X \subset \P^r$ be an $n$-dimensional irreducible projective variety of codimension at least $2$ which satisfies property $N_p$ for some $p \geq 2$, $q \in \P^r$ is a closed point outside of $X$, $\Sigma_q (X)$ is the secant locus of $X$ with respect to $q$, and
\begin{equation*}
X_q=\pi _q (X) \subset \P^{r-1}.
\end{equation*}
where $\pi_q : X \rightarrow \P^{r-1}$ is a linear projection of $X$ from $q$.

Our purpose in this section is to prove {\bf Theorem 3.1 and Theorem 3.3}.

\begin{theorem}\label{thm:main1}
Let $S$ be the homogeneous coordinate ring of $\P^{r-1}$, $S_{X_q}$ the homogeneous coordinate ring of $X_q \subset \P^{r-1}$, and
\begin{equation*}
E:=  \bigoplus_{j \in \Z} H^0
(X,\mathcal{O}_{X} (j))
\end{equation*}
the graded ring of twisted global sections of $\mathcal{O}_{X} (1)$. \\
$(1)$ If $\Sigma_q (X) = \emptyset$, then there is an exact sequence
\begin{equation*}
0 \rightarrow S_{X_q} \rightarrow E \rightarrow S_{\Lambda} (-1) \rightarrow 0.
\end{equation*}
$(2)$ If $\Sigma_q (X) \neq \emptyset$, then there is an exact sequence
$$ \begin{cases} 0 \rightarrow S_{X_q} \rightarrow E \rightarrow S_{\Lambda} (-1) \rightarrow 0\\
0 \rightarrow \mathcal{O}_{X_q} \rightarrow (\pi_q)_* \mathcal{O}_{X} \rightarrow \mathcal{O}_{\Lambda} (-1) \rightarrow 0
\end{cases}$$
where $\Lambda \subset \P^{r-1}$ is a linear subspace with the homogeneous coordinate ring $S_{\Lambda}$.
\end{theorem}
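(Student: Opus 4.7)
The plan is to choose coordinates so that $q = (0{:}\cdots{:}0{:}1)$, denote the last coordinate by $t$ and write $R = S[t]$ with $S$ the homogeneous coordinate ring of $\P^{r-1}$. Projective normality of $X$ (built into $N_p$ for $p \geq 1$) gives $E = R_X$, and the composition $S \hookrightarrow R \twoheadrightarrow R_X$ has kernel $S \cap I_X = I_{X_q}$, producing the inclusion $S_{X_q} \hookrightarrow R_X$. Applying the Snake Lemma to the exact rows $0 \to I_{X_q} \to S \to S_{X_q} \to 0$ and $0 \to I_X \to R \to R_X \to 0$, linked by the vertical inclusions, identifies the cokernel of $S_{X_q} \hookrightarrow R_X$ with $R/(S + I_X)$. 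The task is thus to exhibit $R/(S + I_X)$ as $S_\Lambda(-1)$ for a suitable linear subspace $\Lambda \subseteq \P^{r-1}$.

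The next step is to exploit $N_2$. Because $I_X$ is generated by quadrics and $q \notin X$, some $Q_0 \in (I_X)_2$ has $Q_0(q) \neq 0$; rescaling gives $Q_0 = A_0 + tL_0 + t^2$, and replacing each other quadric generator $Q$ by $Q - Q(q)\cdot Q_0$ produces a minimal generating set $Q_0, Q_1, \ldots, Q_{m-1}$ of $(I_X)_2$ with $Q_i = A_i + tL_i$ for $i \geq 1$. Setting $W := \mathrm{span}_K(L_1, \ldots, L_{m-1}) \subseteq S_1$ and $\Lambda := V(W) \subseteq \P^{r-1}$, I would construct the map $\phi : S_\Lambda(-1) \to R/(S + I_X)$ sending $\bar 1 \mapsto \bar t$. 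Well-definedness is immediate from $L_i t = Q_i - A_i \in I_X + S$, and surjectivity follows by iterating the identity $t^2 \equiv -A_0 - L_0 t \pmod{I_X}$ coming from $Q_0$, which shows that $R/(S + I_X)$ is cyclically generated by $\bar t$ over $S$.

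The hard part will be proving $\phi$ injective, equivalently $\mathrm{Ann}_S(\bar t) = (W)$ (the reverse inclusion being the well-definedness above). For this I would invoke the linear syzygies of the $Q_i$'s guaranteed by $N_2$: writing a first syzygy as $\sum_i (\lambda_{a,i} + \mu_{a,i} t)Q_i = 0$ with $\lambda_{a,i} \in S_1$, $\mu_{a,i} \in K$ and comparing coefficients in powers of $t$ forces $\mu_{a,0} = 0$, $\lambda_{a,0} = -\sum_{i \geq 1} \mu_{a,i} L_i \in W$, and $\sum_{i \geq 1} \mu_{a,i} A_i \in (W)$; combined with the auxiliary elements $tQ_i - L_i Q_0 \in I_X$, these produce enough $t$-linear members of $I_X$ to show that any $f \in S$ with $ft \in S + I_X$ must lie in $(W)$. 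A cleaner alternative is to compare Hilbert polynomials: $\phi$ is surjective, so if the Hilbert polynomials of $R/(S+I_X)$ and $S_\Lambda(-1)$ agree, then since $S_\Lambda(-1)$ is torsion-free of rank one over the domain $S_\Lambda$, the kernel of $\phi$ would be a graded submodule of strictly smaller Hilbert-polynomial degree, hence zero. The sheaf-level sequence in case (2) is then obtained by sheafifying on $\P^{r-1}$: $R_X$ sheafifies to $(\pi_q)_*\mathcal{O}_X$ because $\pi_q$ is a finite morphism and $X$ is projectively normal, and $S_\Lambda(-1)$ sheafifies to $\mathcal{O}_\Lambda(-1)$ precisely when $\Lambda \neq \emptyset$, which is equivalent to $\Sigma_q(X) \neq \emptyset$. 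Case (1) is subsumed by taking $\Lambda = \emptyset$ under the convention $S_\emptyset = K$ concentrated in degree zero, giving the $K$ in degree one that measures the single-degree failure of linear normality of $X_q$.
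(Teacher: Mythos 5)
Your setup is clean and the first two thirds of the argument are correct: with $R=S[t]$, projective normality gives $E=R/I_X$, the elimination ideal identifies $S_{X_q}$ as a submodule of $E$ with cokernel $R/(S+I_X)$, and the quadric $Q_0=A_0+L_0t+t^2$ (which exists because $I_X$ is generated by quadrics and $q\notin X=V((I_X)_2)$) shows this cokernel is cyclic on $\bar t$. The well-definedness of $\phi$ gives $(W)\subseteq \mathrm{Ann}_S(\bar t)$, and in degree one this is an equality. The gap is in the injectivity step, which is exactly where $p\ge 2$ must do work, and your sketch does not actually use it. The facts you list ($\mu_{a,0}=0$, $\lambda_{a,0}\in W$, $\sum_{i\ge 1}\mu_{a,i}A_i\in (W)$) are properties of \emph{whatever individual linear syzygies happen to exist}; they never invoke the statement that linear syzygies \emph{generate} the first syzygy module, and they cannot suffice on their own, since the conclusion of the theorem fails for the $N_1$-but-not-$N_2$ varieties in the paper's first two examples after the proof (there $E/S_{X_q}$ is still cyclic on $\bar t$, but its annihilator is not a linear ideal, as $\pi_q$ is $2:1$). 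Concretely: the elements $tQ_i-L_iQ_0=(A_i-L_iL_0)t-L_iA_0$ already put $A_i-L_iL_0$ into $\mathrm{Ann}_S(\bar t)$ for every $i$, so your claim forces $A_i\in (W)$ for all $i$; your relation $\sum_{i\ge 1}\mu_{a,i}A_i\in (W)$ delivers this only if the vectors $(\mu_{a,1},\dots,\mu_{a,m-1})$ span $K^{m-1}$, i.e.\ only if for each $i$ there is a linear syzygy of the special shape $tQ_i+\sum_j \lambda_{j}Q_j=0$ with every $\lambda_j\in S_1$ (no $t$). Nothing in your sketch produces these special syzygies.

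They \emph{can} be produced, but it takes an extra idea: by $N_2$ the Koszul syzygy $Q_iQ_0-Q_0Q_i$ is an $R_1$-combination of linear syzygies, and comparing the coefficient of $t^2$ in each of the $m$ components shows that the standard basis vector $e_i$ lies in the span of the $(\mu_{a,j})_{j\ge1}$; once you have the resulting special syzygies, the module $G=I_X/(Q_0)\subset S\oplus S(-1)$ is generated in degree $2$ over $S$ and $\mathrm{Ann}_S(\bar t)=(W)$ follows. This is the step your proposal is missing, and it is the real content of the theorem; the paper gets it instead from the Koszul-cohomology Lemma~3.4 applied to the codimension-one subspace $\widetilde V\subset H^0(X,\mathcal{O}_X(1))$. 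Your fallback via Hilbert polynomials is circular: the Hilbert polynomial of $R/(S+I_X)=E/S_{X_q}$ requires knowing that of $S_{X_q}$, equivalently the degree and cokernel of $\pi_q$, which is precisely what the theorem establishes.
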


As the first application of Theorem \ref{thm:main1}, we reprove a version of a result in \cite{V}.

\begin{corollary}\label{cor:birational}
If $\Sigma_q (X) \neq \emptyset$, then it is a quadric in the linear subspace $<q,\Lambda>$. Therefore $\Lambda = \mbox{Sing}(\pi_q)$. In particular, $\pi_q : X \rightarrow X_q$ is birational.
\end{corollary}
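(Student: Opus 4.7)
The plan is to deduce all three conclusions from the two exact sequences of Theorem \ref{thm:main1}. The sheaf exact sequence
\[
0 \to \mathcal{O}_{X_q} \to (\pi_q)_{*}\mathcal{O}_{X} \to \mathcal{O}_{\Lambda}(-1) \to 0
\]
localizes the failure of $\pi_q$ to be an isomorphism; once $\mbox{Sing}(\pi_q)$ is pinned down, birationality will follow and $\Sigma_q(X)$ will be identified as a fiber-type subscheme which we then recognize as a quadric.

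First I would verify $\Lambda \subsetneq X_q$. Because $\mathcal{O}_{\Lambda}(-1)$ is a quotient of $(\pi_q)_{*}\mathcal{O}_{X}$ and the latter is supported on $X_q$, we get $\Lambda \subseteq X_q$. If equality held, $X_q$ would be a linear subspace of $\P^{r-1}$, so $X$ would be contained in $\langle q, X_q\rangle$, a linear subspace of $\P^r$ of dimension $n+1$; this contradicts $\mbox{codim}(X,\P^r) \geq 2$ together with nondegeneracy of $X$. Consequently $\mathcal{O}_{X_q} \to (\pi_q)_{*}\mathcal{O}_{X}$ is an isomorphism on the dense open set $X_q \setminus \Lambda$ and fails to be one at every point of $\Lambda$. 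Therefore $\pi_q : X \to X_q$ restricts to an isomorphism over the preimage of this open set, which proves birationality, and the failure locus on the target is precisely $\Lambda$, i.e.\ $\mbox{Sing}(\pi_q) = \Lambda$.

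Next I would identify $\Sigma_q(X)$ with $X \cap \langle q, \Lambda\rangle$. Since projection from $q$ sends any $x \in X$ to $\langle q, x\rangle \cap \P^{r-1}$, the preimage $\pi_q^{-1}(\Lambda)$ equals $X \cap \langle q, \Lambda\rangle$ set-theoretically. Moreover, a point $x \in X$ lies on a secant line through $q$ exactly when the scheme-theoretic fiber $\pi_q^{-1}(\pi_q(x))$ has length at least two, which by the previous step means $\pi_q(x) \in \Lambda$. Hence $\Sigma_q(X) = X \cap \langle q, \Lambda\rangle$, a closed subscheme of the $(s+1)$-dimensional linear space $\langle q, \Lambda\rangle$.

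The remaining step, which I expect to be the main obstacle, is to recognize $X \cap \langle q, \Lambda\rangle$ as a degree-$2$ hypersurface. The plan is to use the sheaf sequence to show that the fiber $\pi_q^{-1}(y)$ has length exactly $2$ for generic $y \in \Lambda$: tensoring the sequence with the residue field $K(y)$ bounds this length above by $2$, with equality once a $\mbox{Tor}_1^{\mathcal{O}_{X_q,y}}(\mathcal{O}_{\Lambda}, K(y))$ term vanishes, while the lower bound of $2$ is immediate since $\pi_q$ is not an isomorphism on $\Lambda$. The required Tor vanishing should follow from the regularity of $\Lambda$ together with a depth estimate on $\mathcal{O}_{X_q}$ along $\Lambda$ that uses property $N_2$ of $X$; alternatively, the same conclusion is reachable by combining the generic-rank-one statement coming from birationality with upper semicontinuity of fiber length. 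Geometrically this says each line $\langle q, y\rangle$ with $y \in \Lambda$ meets $X$ in a length-$2$ subscheme, so as $y$ ranges over $\Lambda$ we sweep out a codimension-one subvariety of $\langle q, \Lambda\rangle$ of degree $2$, i.e.\ a quadric.
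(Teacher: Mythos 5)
Your first half coincides with the paper's proof: both deduce $\Lambda\subseteq X_q$ from the support of $(\pi_q)_*\mathcal{O}_X$, rule out $\Lambda=X_q$ by nondegeneracy, and conclude birationality with $\mbox{Sing}(\pi_q)=\Lambda$. Where you genuinely diverge is the identification of $\Sigma_q(X)$ as a quadric. The paper argues synthetically: every line $\langle q,y\rangle$ with $y\in\Lambda$ meets $X$ in a scheme of length at least $2$, so $\Sigma_q(X)$ contains a hypersurface of some degree $v\geq 2$ in $\langle q,\Lambda\rangle$, and $v\geq 3$ would produce a trisecant line to $X$, impossible because property $N_2$ forces $I_X$ to be generated by quadrics. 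You instead extract the length bound directly from the sequence $0\to\mathcal{O}_{X_q}\to(\pi_q)_*\mathcal{O}_X\to\mathcal{O}_\Lambda(-1)\to 0$. That works, and in fact more easily than you suggest: no $\mbox{Tor}$ vanishing is needed, since right-exactness of $\otimes K(y)$ already gives $\dim_{K(y)}\bigl((\pi_q)_*\mathcal{O}_X\otimes K(y)\bigr)\leq 2$, and the unit $1\mapsto 1$ shows the image of $\mathcal{O}_{X_q}\otimes K(y)$ is nonzero, giving length exactly $2$ on all of $\Lambda$. Your route is more self-contained (the quadric-generation of $I_X$ enters only through Theorem 3.1, not a second time via trisecants), while the paper's trisecant argument reaches the degree bound with less machinery. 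The one place you should tighten is the final sentence: "sweeping out a degree-$2$ subvariety" needs justification that $Z:=X\cap\langle q,\Lambda\rangle$ is actually a Cartier divisor of $\langle q,\Lambda\rangle$ cut out by a single quadratic form, not merely a set of dimension $s$. This follows from your fiber computation -- constant fiber length $2$ over the reduced base $\Lambda$ makes $Z\to\Lambda$ finite flat, so $Z$ is Cohen--Macaulay of pure codimension one in the smooth space $\langle q,\Lambda\rangle$, hence an effective divisor, whose degree is then read off from intersection with a line through $q$ -- but as written this step is asserted rather than proved, and it is exactly the content of the word "quadric" in the statement.
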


\begin{proof}
The second exact sequence in Theorem \ref{thm:main1} shows that $\Lambda \subset X_q$ since $\pi_* \mathcal{O}_X$ is supported on $X_q$. Also since $X_q \subset \P^{r-1}$ is a nondegenerate proper subvariety, $\Lambda \varsubsetneqq X_q$. Therefore $\mathcal{O}_{X_q}$ and $(\pi_q )_* \mathcal{O}_X $ are isomorphic on the nonempty open subset $X_q \subset \Lambda$ of $X_q$. This completes the proof that $\pi_q : X \rightarrow X_q$ is birational and $\mbox{Sing}(X_q) = \Lambda$. Therefore $\Sigma_q (X) \subset <q,\Lambda>$ and every line in $<q,\Lambda>$ passing through $q$ has a
multiple intersection with $X$. This implies that $\Sigma_q (X)$ should contain a hypersurface
\begin{equation*}
Q \subset <q,\Lambda>
\end{equation*}
of degree $v \geq 2$. Observe that if $v \geq 3$ or $Q$ is a proper subset of $< q,\Lambda>$ then
$X$ admits a tri-secant line, which contradicts to the assumption that $X \subset \P^r$ is cut
out by quadrics. Therefore $v = 2$ and $\Sigma_q (X) = Q$ set-theoretically. Finally, the
homogeneous ideal of ¡×$\Sigma_q (X)$ in $< q,\Lambda>$ is indeed generated by a quadric since
$\Sigma_q (X) = <q,\Lambda> \cap X$ as a scheme.
\end{proof}

Theorem \ref{thm:main1} enables us to prove that the cohomological and local properties of
$X_q \subset \P^{r-1}$ can be precisely determined by $\Sigma_q (X)$:

\begin{theorem}\label{thm:main2}
Let $s$ be the dimension of $\Sigma_q (X)$ where $s=-1$ if $\Sigma_q (X) = \emptyset$. \\
$(1)$ $X_q$ is linearly normal if and only if $\Sigma_q (X) \neq \emptyset$. Therefore
$$\Delta(X_q,\mathcal{O}_{X_q} (1))=\begin{cases} \Delta(X,\mathcal{O}_{X} (1)) & \mbox{if $\Sigma_q (X) = \emptyset$, and}\\
\Delta(X,\mathcal{O}_{X} (1))+1 & \mbox{if $\Sigma_q (X) \neq \emptyset$.} \end{cases}$$
$(2)$ $g(X_q,\mathcal{O}_{X_q} (1))=\begin{cases}g(X,\mathcal{O}_{X} (1)) & \mbox{if $s < n-1$, and}\\ g(X,\mathcal{O}_{X} (1))+1 & \mbox{if $s = n-1$.} \end{cases}$ \\
$(3)$ $X_q$ is $j$-normal for every $j \geq 2$. Therefore
\begin{equation*}
h^0 (\P^{r-1},\mathcal{I}_{X_q /\P^{r-1}} (2))=h^0 (\P^r,\mathcal{I}_{X/\P^r} (2))+s-r.
\end{equation*}
$(4)$ $X_q$ satisfies property $N_{3,p-1}$. In particular, the homogeneous ideal of $X_q \subset \P^{r-1}$ is generated by quadratic and cubic forms.\\
$(5)$ $\mbox{Reg}(X_q)=\mbox{max}\{3,\mbox{Reg}(X)\}$.\\
$(6)$ Assume that $H^i (X,\mathcal{O}_{X} (j))=0$ for $1 \leq i \leq n-1$ and all $j \leq -i$. Then
\begin{equation*}
\mbox{depth}(X_q)=\mbox{min}\{\mbox{depth}(X),s+2\}.
\end{equation*}
$(7)$ Every closed point in $\mbox{Sing}(\pi _q)$ is non-normal point of $X_q$. Therefore
\begin{equation*}
\mbox{Nor}(X_q) = \pi _q (\mbox{Nor}(X) \setminus
\Sigma_q (X)) =\pi _q (\mbox{Nor}(X)) \setminus
\mbox{Sing}(\pi _q).
\end{equation*}
where $\mbox{Nor}(Z)$ denotes the locus of normal points of a variety $Z$. In particular, if $X$
is normal then $\pi_q : X \rightarrow  X_q$ is the normalization of $X_q$. \\
$(8)$ Assume that $X$ is locally Cohen-Macaulay and $\mbox{dim}~\Sigma_q (X) < \mbox{dim}~X -1$. Then the generic point $\eta \in X_q$ of $\mbox{Sing}(\pi _q)$ is a Goto point and
\begin{equation*}
\mbox{CM}(X_q)=S_2 (X_q) = X_q \setminus \mbox{Sing}(\pi _q)
\end{equation*}
where $CM(X_q)$ and $S_2 (X_q)$ denote respectively the locus of Cohen-Macaulay points
and that of $S_2$-points.
\end{theorem}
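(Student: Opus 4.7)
The plan is to use Theorem~\ref{thm:main1} as the principal engine. Abbreviate its two exact sequences as
\begin{equation*}
(\alpha)\colon\ 0 \to S_{X_q} \to E \to S_\Lambda(-1) \to 0 \qquad\text{and}\qquad (\beta)\colon\ 0 \to \mathcal{O}_{X_q} \to (\pi_q)_* \mathcal{O}_X \to \mathcal{O}_\Lambda(-1) \to 0,
\end{equation*}
with $\Lambda = \mbox{Sing}(\pi_q)$ of dimension $s$ by Corollary~\ref{cor:birational}. Parts $(1)$--$(3)$ are Hilbert-function exercises on these sequences. For $(1)$, the degree-$1$ piece of $(\alpha)$ gives $\dim (S_{X_q})_1 = \dim E_1 - \dim (S_\Lambda)_0$, so $X_q$ is linearly normal iff $\Lambda \neq \emptyset$ iff $\Sigma_q(X) \neq \emptyset$; combining with $\deg X = \deg X_q$ (by birationality of $\pi_q$) and the identity $\Delta = n + \deg - h^0(\mathcal{O}(1))$ yields the $\Delta$-genus formula. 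For $(2)$, twist $(\beta)$ by $\mathcal{O}(k)$ and take Euler characteristics (legal as $\pi_q$ is finite): $\chi_{X_q}(k) = \chi_X(k) - \binom{k-1+s}{s}$, and the subtracted polynomial of degree $s$ perturbs the $\chi_{n-1}$ coefficient exactly when $s = n-1$. For $(3)$, since $\Lambda$ is projectively normal, $(S_\Lambda)_{j-1} = H^0(\mathcal{O}_\Lambda(j-1))$ and sheafifying $(\alpha)$ produces $(\beta)$ with matched maps; because $E_j \to (S_\Lambda)_{j-1}$ is surjective by $(\alpha)$, the connecting map $H^0(\mathcal{O}_\Lambda(j-1)) \to H^1(\mathcal{O}_{X_q}(j))$ in the $H^0$-sequence of $(\beta)$ vanishes, so $(S_{X_q})_j$ and $H^0(\mathcal{O}_{X_q}(j))$ are the same kernel for $j \geq 2$; the degree-$2$ count then gives $\dim(I_{X_q})_2 - \dim(I_X)_2 = (s+1) - (r+1) = s - r$.

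For $(4)$ and $(5)$, I would apply the long exact sequence of $\mbox{Tor}^S(\cdot, K)$ to $(\alpha)$. The resolution of $S_\Lambda(-1)$ is a shifted Koszul complex with Betti numbers concentrated in internal degree $j = 1$ and regularity $2$. The Betti table of $E$ as an $S$-module can be read from the $N_p$-resolution of $S_X$ over $R = S[x_r]$: since $q \notin X$, the variable $x_r$ is a non-zero-divisor on $S_X$, and change of rings via the length-one Koszul resolution of $S$ over $R$ transfers the $N_p$-linearity of $S_X$ over $R$ to linearity of the first $p$ steps of the $S$-resolution of $E$, with at worst a one-step loss at the boundary. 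Tracking internal degrees in the Tor sequence: for $j \geq 3$ and $1 \leq i \leq p-1$, the $S_\Lambda(-1)$ terms vanish (since $\beta_{i,j}(S_\Lambda(-1)) = 0$ unless $j = 1$) and the $E$ terms vanish by the transferred linearity, yielding $N_{3,p-1}$. The regularity assertion follows from the standard bound $\mbox{Reg}(S_{X_q}) \leq \max\{\mbox{Reg}(E), \mbox{Reg}(S_\Lambda(-1)) + 1\} = \max\{\mbox{Reg}(X), 3\}$, the reverse inequality coming from an extremal generator.

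Part $(6)$, the depth formula, is the main technical step. Depth is characterized as the smallest $i \geq 1$ with $H^i(\P^{r-1}, \mathcal{I}_{X_q}(j)) \neq 0$ for some $j$; via $0 \to \mathcal{I}_{X_q} \to \mathcal{O}_{\P^{r-1}} \to \mathcal{O}_{X_q} \to 0$ this reduces (in the range $2 \leq i \leq r-2$) to the smallest $i-1$ with $H^{i-1}(\mathcal{O}_{X_q}(j)) \neq 0$. Taking cohomology of $(\beta)$ twisted by $\mathcal{O}(j)$, the contributions from $\Lambda \cong \P^s$ vanish by Bott except at $i = 0$ (for $j \geq 1$) and $i = s$ (for $j \leq -s$). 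The hypothesis $H^i(X, \mathcal{O}_X(j)) = 0$ for $1 \leq i \leq n-1$, $j \leq -i$ suppresses middle cohomology of $X$ in the relevant range, so the long exact sequence shows the first nonvanishing $H^i(\mathcal{O}_{X_q}(*))$ sits at $i = \min\{\mbox{depth}(X) - 1,\, s + 1\}$: one candidate from $X$'s own obstruction propagating through the injection $\mathcal{O}_{X_q} \hookrightarrow (\pi_q)_*\mathcal{O}_X$, the other from $H^s(\mathcal{O}_\Lambda(j-1))$ injecting into $H^{s+1}(\mathcal{O}_{X_q}(j))$. Shifting by one then gives $\mbox{depth}(X_q) = \min\{\mbox{depth}(X),\, s+2\}$. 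The delicate point, and the main obstacle, is tracking non-cancellation in the connecting maps to confirm both candidates are genuinely realized.

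Parts $(7)$ and $(8)$ are local. On $X_q \setminus \Lambda$, $\pi_q$ is an isomorphism by Corollary~\ref{cor:birational}, so normality, $S_2$, and Cohen-Macaulayness carry over from $X$ unchanged. For $(7)$, at $y \in \Lambda$, localizing $(\beta)$ shows $\mathcal{O}_{X_q, y} \subsetneq ((\pi_q)_*\mathcal{O}_X)_y$ is a proper finite birational extension with nonzero cokernel $\mathcal{O}_\Lambda(-1)_y$; when $X$ is normal above $y$, this extension is exactly the normalization, so $y$ is non-normal, giving $\mbox{Nor}(X_q) = \pi_q(\mbox{Nor}(X)) \setminus \mbox{Sing}(\pi_q)$. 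For $(8)$, under $X$ locally Cohen-Macaulay and $s < n-1$, localizing $(\beta)$ at the generic point $\eta$ of $\Lambda$ realizes $\mathcal{O}_{X_q, \eta}$ as a gluing of the two Cohen-Macaulay stalks at $\pi_q^{-1}(\eta) \subset X$ along their common residue, the prototypical Goto configuration; the local depth count from $(6)$ confirms $\mbox{depth}(\mathcal{O}_{X_q, \eta}) < \dim \mathcal{O}_{X_q, \eta}$ along $\Lambda$, so combined with Cohen-Macaulayness on the complement we obtain $\mbox{CM}(X_q) = S_2(X_q) = X_q \setminus \mbox{Sing}(\pi_q)$.
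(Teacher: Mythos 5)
Your overall architecture coincides with the paper's: everything is extracted from the two exact sequences of Theorem~\ref{thm:main1} together with Corollary~\ref{cor:birational}, and parts $(2)$, $(6)$, $(7)$, $(8)$ follow the paper's arguments essentially verbatim (including the honest identification, in $(6)$, of the one delicate point: the hypothesis forces $H^s(X,\mathcal{O}_X(j))=0$ for $j\le -s$, so $H^s(\mathcal{O}_{\Lambda}(j-1))$ injects into $H^{s+1}(\mathcal{O}_{X_q}(j))$ and the candidate $s+2$ is realized). Your treatment of $(3)$ is actually more direct than the paper's: you observe that $E_j\to (S_\Lambda)_{j-1}$ is surjective by $(\alpha)$ and that $\Gamma_*$ of $(\beta)$ is the same map, so the connecting homomorphism dies and $(S_{X_q})_j=H^0(\mathcal{O}_{X_q}(j))$; the paper instead runs a diagram chase through auxiliary modules $K,L,M$ and a Koszul complex to kill $H^1(\mathcal{I}_{X_q}(j))$. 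Both are valid, and yours is shorter.

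The one substantive worry is your input to $(4)$ and $(5)$: the shape of the minimal $S$-resolution of $E$. The paper obtains $\beta^S_{i,j}(E)=0$ for $0\le i\le p-1$, $j\ge 2$ inside the proof of Theorem~\ref{thm:main1} via Lemma~\ref{lem:CKP} (the bundle $\mathcal{M}_V$ for the codimension-one base-point-free subspace $V$); since this fact is proved there rather than stated in Theorem~\ref{thm:main1}, you must supply it, and your mechanism as written is garbled. ``$x_r$ is a non-zero-divisor on $S_X$'' has nothing to do with $q\notin X$ (it follows from nondegeneracy), and the length-one Koszul resolution $0\to R(-1)\to R\to R/(x_r)\to 0$ computes the resolution of $E/x_rE$ over $R/(x_r)$, which is \emph{not} the resolution of $E$ under restriction of scalars along the subring inclusion $S\hookrightarrow R$ that linear projection actually induces. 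A correct elementary route does exist: flat base change gives $\mathrm{Tor}^S_i(E,K)\cong \mathrm{Tor}^R_i(E,R/S_+R)$, and the sequence $0\to (R/S_+R)(-1)\xrightarrow{x_r} R/S_+R\to K\to 0$ shows that $x_r$ acts bijectively on $\mathrm{Tor}^S_i(E,K)_d$ for $d\ge i+3$ and surjectively for $d\ge i+2$ when $i\le p-1$; what $q\notin X$ buys is that $E$ is module-finite over $S$, so these Tor's are finite-dimensional and the vanishing follows. You should either carry this out or cite Lemma~\ref{lem:CKP}. Two smaller points: in $(1)$, the degree-one piece of $(\alpha)$ gives $\dim(S_{X_q})_1=r$ in \emph{both} cases (when $\Lambda=\emptyset$ the cokernel is $K(-1)$, still one-dimensional in degree one), so linear normality must be read off from $(\beta)$ — namely $h^0(\mathcal{O}_{X_q}(1))=r+1$ when $\pi_q$ is an isomorphism and $r$ otherwise; and in $(5)$ the lower bound $\mathrm{Reg}(X_q)\ge 3$ is not supplied by ``an extremal generator'' but by the fact that $\deg X_q\ge \mathrm{codim}(X_q,\P^{r-1})+2$, so $X_q$ is not $2$-regular by the Eisenbud--Goto classification, while $\mathrm{Reg}(X_q)\ge\mathrm{Reg}(X)$ needs the reverse regularity estimate on $(\alpha)$ (or the paper's explicit cohomology table).
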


We now turn to proofs of Theorem \ref{thm:main1} and Theorem \ref{thm:main2}.

\noindent {\bf Proof of Theorem \ref{thm:main1}.} Since $\pi_q : X \rightarrow X_q$ is a finite projective morphism,
\begin{equation*}
E \quad \cong \quad \bigoplus_{j \in \Z} H^0 (X_q,(\pi_q)_* \mathcal{O}_{X} \otimes
\mathcal{O}_{X_q} (j))
\end{equation*}
is a finitely generated graded $S$-module. Consider the minimal free resolution
\begin{equation*}
\cdots \rightarrow \bigoplus_{j \geq 0} S(-i-j)^{\beta_{i,j}}
\rightarrow \cdots \rightarrow \bigoplus_{j \geq 0}
S(-1-j)^{\beta_{1,j}} \rightarrow \bigoplus_{j \geq 0}
S(-j)^{\beta_{0,j}} \rightarrow E \rightarrow 0
\end{equation*}
of $E$ as a graded $S$-module. It is easy to check that $\beta_{0,0} = \beta_{0,1}=1$. Now we claim
that $\beta_{i,j} = 0$ for $0 \leq i \leq p -1$ and all $j \geq 2$. Letting $V = H^0(\P^{r-1},\mathcal{O}_{\P^{r-1}}(1))$ and
$\mathcal{M} = \Omega_{\P^{r-1}} \otimes \mathcal{O}_{\P^{r-1}}(1)$, recall that $\beta_{i,j}$ is equal to $\mbox{dim}_K ~ Ker(\varphi_{i,j})$ where
\begin{equation*}
\varphi_{i,j}:H^1 (\P^{r-1},\bigwedge^{i+1} \mathcal{M} \otimes \pi _*
\mathcal{O}_{X} \otimes \mathcal{O}_{\P^{r-1}} (j-1))
\rightarrow \bigwedge^{i+1} V \otimes H^1 (\P^{r-1},\pi _*
\mathcal{O}_{X} \otimes \mathcal{O}_{\P^{r-1}} (j-1)).
\end{equation*}
is the natural homomorphism induced from the Euler sequence
\begin{equation*}
0 \rightarrow \mathcal{M} \rightarrow V \otimes \mathcal{O}_{\P^{r-1}}
\rightarrow \mathcal{O}_{\P^{r-1}} (1) \rightarrow 0.
\end{equation*}
Now let $\widetilde{V}=\pi ^* V \subset H^0 (X,\mathcal{O}_{X} (1))$ and let $\widetilde{\mathcal{M}} = \pi ^* \mathcal{M}$. Then
we have the following commutative diagram:
\begin{equation*}
\begin{CD}
H^1 (\P^{r-1},\bigwedge^{i+1} \mathcal{M} \otimes \pi _*
\mathcal{O}_{X} \otimes \mathcal{O}_{\P^{r-1}} (j-1)) &
\stackrel{\varphi_{i,j}}{\rightarrow} & \bigwedge^{i+1} V \otimes
H^1 (\P^{r-1},\pi _* \mathcal{O}_{X} \otimes
\mathcal{O}_{\P^{r-1}} (j-1)) \\
\wr \parallel &  & \wr \parallel  \\
H^1 (X,\bigwedge^{i+1} \widetilde{\mathcal{M}} \otimes
\mathcal{O}_{X} (j-1)) &
\stackrel{\psi_{i,j}}{\rightarrow} & \bigwedge^{i+1} \widetilde{V}
\otimes H^1 (X,\mathcal{O}_{X} (j-1))
\end{CD}
\end{equation*}
So we need to show that the maps
\begin{equation*}
\psi_{i,j} : H^1 (X,\bigwedge^{i+1}
\widetilde{\mathcal{M}} \otimes \mathcal{O}_{X} (j-1))
\rightarrow \bigwedge^{i+1} \widetilde{V} \otimes H^1
(X,\mathcal{O}_{X} (j-1))
\end{equation*}
are injective for $0 \leq i \leq p-1$ and all $j \geq 2$. We can prove this by applying Lemma \ref{lem:CKP} to $(X,\mathcal{O}_{X} (1))$ since $X \subset \P^r$ satisfies property $N_p$ and $\widetilde{V} \subset H^0 (X,\mathcal{O}_{X} (1))$ has codimension one. Therefore $E$ admits a minimal free resolution of the form
\begin{equation*}
\cdots \rightarrow S(-2)^{\beta_{1,1}} \rightarrow S \oplus S(-1)
\rightarrow E \rightarrow 0.
\end{equation*}
Let $G$ be the kernel of the surjective homomorphism $S \oplus
S(-1) \rightarrow E$. Then we have the following commutative
diagram with exact rows and columns:
\begin{equation*}
(3.1) \quad \quad \begin{CD}
              & 0              &             & 0 &             & 0       & \\
              & \downarrow     &             & \downarrow &             & \downarrow & \\
0 \rightarrow & I_{X_q / \P^{r-1}} & \rightarrow & S & \rightarrow & S_{X_q} & \rightarrow 0 \\
              & \downarrow     &             & \downarrow &             & \downarrow & \\
0 \rightarrow & G & \rightarrow & S \oplus S(-1) & \rightarrow & E & \rightarrow 0 \\
              & \downarrow     &             & \downarrow &             & \downarrow & \\
0 \rightarrow & H & \rightarrow & S(-1) & \rightarrow & F & \rightarrow 0 \\
               & \downarrow     &             & \downarrow &             & \downarrow & \\
 & 0              &             & 0 &             & 0       & \\
\end{CD}
\end{equation*}
Since $G$ is generated by elements of degree $2$, the homogeneous
ideal $H(1) \subset S$ is generated by linear forms.

If $H(1)$ is the irrelevant ideal $S_{+}$, then $F=K(-1)$. Thus by sheafifying the third row of $(3.1)$, we have an isomorphism $\mathcal{O}_{X_q} \cong (\pi_q )_* \mathcal{O}_X$. In particular, $\pi_q : X \rightarrow X_q$ is an isomorphism and hence $\Sigma_q (X) = \emptyset$.

If $H(1)$ is a proper subset of $S_{+}$, then let $\Lambda \subset \P^{r-1}$ be the linear subspace defined by this ideal. Then
$F \cong S_{\Lambda} (-1)$. This gives the first short exact sequence
\begin{equation*}
(3.2) \quad \quad \quad 0 \rightarrow S_{X_q} \rightarrow E \rightarrow S_{\Lambda} (-1) \rightarrow 0.
\end{equation*}
Also the second exact sequence
\begin{equation*}
(3.3) \quad \quad \quad0 \rightarrow \mathcal{O}_{X_q} \rightarrow \pi _*
\mathcal{O}_{X} \rightarrow \mathcal{O}_{\Lambda} (-1)
\rightarrow 0.
\end{equation*}
is immediately obtained by sheafifying  the second exact sequence $(3.2)$. From $(3.3)$, it is obvious that $\pi_q : X \rightarrow X_q$ fails to be isomorphic exactly along $\Lambda$. This implies that $\mbox{Sing}(X_q) = \Lambda$ and $\Sigma_q (X) \neq \emptyset$. \qed \\

\noindent {\bf Proof of Theorem \ref{thm:main2}.} $(1)$ If $\Sigma_q (X) = \emptyset$, then $\pi_q : X \rightarrow X_q$ is isomorphic and hence $X_q \subset \P^{r-1}$ fails to be linearly normal. Now assume that $\Sigma_q (X) \neq \emptyset$. Since $\mathcal{O}_{X_q} (1)$ is very ample and $\pi _q : X \rightarrow X_q$ fails to be isomorphic, we have
\begin{equation*}
r \leq h^0 (X_q,\mathcal{O}_{X_q} (1)) < h^0
(X,\mathcal{O}_{X} (1))=r+1.
\end{equation*}
This completes the proof that $h^0 (X_q,\mathcal{O}_{X_q} (1))=r$ and hence $X_q \subset \P^{r-1}$ is linearly normal.

\noindent $(2)$ From the exact sequence $(3.3)$,
\begin{equation*}
\chi (X_q,\mathcal{O}_{X_q} (k)) = \chi (X,\mathcal{O}_{X} (k))-\chi (\Lambda,\mathcal{O}_{\Lambda} (k-1)).
\end{equation*}
where $\Lambda \cong \P^s$. If $s < n-1$ then $\chi_{n-1} (X_q,\mathcal{O}_{X_q} (1)) =\chi_{n-1}
(X,\mathcal{O}_{X} (1))$ and hence $g(X_q,\mathcal{O}_{X_q} (1))=g(X,\mathcal{O}_{X} (1))$. On the other hand, if $s=n-1$ then
\begin{equation*}
\chi_{n-1} (X_q,\mathcal{O}_{X_q} (1)) =\chi_{n-1}
(X,\mathcal{O}_{X} (1))-\chi_{n-1}
(\P^{n-1},\mathcal{O}_{\P^{n-1}} (1))=\chi_{n-1}
(X,\mathcal{O}_{X} (1))-1
\end{equation*}
and hence $g(X_q,\mathcal{O}_{X_q} (1))=g(X,\mathcal{O}_{X} (1))+1$.

\noindent $(3)$ The commutative diagram $(3.1)$ gives us the following:
\begin{equation*}
\begin{CD}
              &               &             &                                 &             &   0           &             \\
              &               &             &                                 &             & \downarrow    &             \\
              & 0             &             &                                 &             & I_{X_q /\P^{r-1}}    &             \\
              & \downarrow    &             &                                 &             & \downarrow    &             \\
0 \rightarrow & K             & \rightarrow & S(-2)^{\beta_{1,1}}             & \rightarrow &  G            & \rightarrow 0\\
              & \downarrow    &             & \parallel                       &             & \downarrow    &             \\
0 \rightarrow & L             & \rightarrow & S(-2)^{\beta_{1,1}}             & \rightarrow & H             & \rightarrow 0 \\
              & \downarrow    &             &                                 &             & \downarrow    &             \\
              & I_{X_q /\P^{r-1}}    &             &                                 &             & 0             &             \\
              & \downarrow    &             &                                 &             &               &             \\
              & 0             &             &                                 &             &               &             \\
\end{CD}
\end{equation*}
We can observe the following facts:
\begin{enumerate}
\item[$1.$] $H^1 (\P^{r-1} ,\widetilde{G}(j))=H^1 (\P^{r-1}
,\widetilde{K}(j))=0$ for all $j \in \Z$ by the minimality of the
resolution. \item[$2.$] $H^2 (\P^{r-1} ,\widetilde{K}(j))=0$ for all
$j \in \Z$ by the exact sequence
\begin{equation*}
0 \rightarrow \widetilde{K} \rightarrow S(-2)^{\beta_{1,1}}
\rightarrow \widetilde{G} \rightarrow 0.
\end{equation*}
\end{enumerate}
Now we claim that $L$ is $3$-regular. Assume that $H(1)$ is
generated exactly by $\gamma$ linear forms. Then we get the
following commutative diagram:
\begin{equation*}
\begin{CD}
              &  0           &             &      0                        &             &              &             \\
              & \downarrow   &             &  \downarrow                   &             &      &             \\
              & S(-2)^{\beta_{1,1}-\gamma} & = & S(-2)^{\beta_{1,1}-\gamma}   &             &      &             \\
              & \downarrow    &             &  \downarrow                     &             &     &             \\
0 \rightarrow & L             & \rightarrow & S(-2)^{\beta_{1,1}}             & \rightarrow &  H            & \rightarrow 0\\
              & \downarrow    &             &  \downarrow                     &             & \parallel   &             \\
0 \rightarrow & M             & \rightarrow & S(-2)^{\gamma}                  & \rightarrow & H             & \rightarrow 0 \\
              & \downarrow    &             &   \downarrow                    &             &     &             \\
              & 0             &             &  0                              &             &               &             \\
\end{CD}
\end{equation*}
The third row is a part of the Koszul complex twisted by $-1$ of
the ideal $H(1)$ and hence $M$ is $3$-regular, which completes the
proof that $L$ is $3$-regular. By using the exact sequence $0
\rightarrow \widetilde{K} \rightarrow \widetilde{L} \rightarrow
\mathcal{I}_{X_q /\P^{r-1}} \rightarrow 0$, one can check that
\begin{equation*}
H^1 (\P^r,\mathcal{I}_{X_q /\P^{r-1}} (j))=0 \quad \mbox{for all $j \geq 2$.}
\end{equation*}
Therefore $X_q \subset \P^{r-1}$ is $j$-normal for all $j \geq 2$. Since $X \subset \P^r$ and $X_q \subset \P^{r-1}$ are $2$-normal varieties, we have the following equalities:
$$(3.4) \quad \begin{cases} h^0 (\P^r,\mathcal{I}_{X/\P^r} (2)) & = {{r+2} \choose {2}}-h^0 (X,\mathcal{O}_{X} (2)) \\
h^0 (\P^{r-1},\mathcal{I}_{X_q/\P^{r-1}} (2)) & = {{r+1} \choose {2}}-h^0 (X_q,\mathcal{O}_{X_q} (2))
\end{cases}$$
Also the exact sequence $0 \rightarrow S_{X_q} \rightarrow E \rightarrow S_{\Lambda}(-1) \rightarrow 0$ shows that
\begin{equation*}
(3.5) \quad \quad h^0 (X,\mathcal{O}_{X} (2)) -h^0 (X_q,\mathcal{O}_{X_q} (2)) = s +1.
\end{equation*}
Now the desired equality
\begin{equation*}
h^0 (\P^{r-1},\mathcal{I}_{X_q/\P^{r-1}} (2))=h^0 (\P^r,\mathcal{I}_{X/\P^r} (2))+ s-r.
\end{equation*}
is obtained immediately by $(3.4)$ and $(3.5)$.

\noindent $(4)$ Consider the Koszul cohomology long exact sequence induced by $(3.2)$. Indeed $S_{\Lambda} (-1)$ is $2$-regular and Lemma \ref{lem:CKP} guarantees that $E$ admits a minimal free resolution of the form
\begin{equation*}
\cdots \rightarrow S(-p)^{\beta_{p-1,1}} \rightarrow \cdots \rightarrow S(-2)^{\beta_{1,1}} \rightarrow S \oplus S(-1)
\rightarrow E \rightarrow 0.
\end{equation*}
This gives us the desired vanishing of Koszul cohomology groups of $S_{X_q}$, which concludes the proof of $(4)$.

\noindent $(5)$ We claim that the values of $h^i (X_q , \mathcal{O}_{X_q} (j))$ are given as follows:
\begin{enumerate}
\item[$(a)$] For all $j \geq 1$, $h^0 (X_q ,\mathcal{O}_{X_q} (j))=h^0 (X,\mathcal{O}_{X} (j))-{{s+j-1} \choose {s}}$.
\item[$(b)$] If $1 \leq i \leq s-1$ or $i \geq s+2$, then $h^i (X_q ,\mathcal{O}_{X_q} (j)) = h^i (X,\mathcal{O}_{X} (j))$ for all $j \in \Z$.
\item[$(c)$] If $s=0$, then $h^1 (X_q ,\mathcal{O}_{X_q} (j))=\begin{cases} h^1 (X,\mathcal{O}_{X} (j))+1 & \mbox{for $j \leq -1$, and}\\
h^1 (X,\mathcal{O}_{X} (j)) & \mbox{for $j \geq 0$.} \end{cases}$
\item[$(d)$] If $s \geq 1$ and $j \geq -s+1$, then $h^i (X_q ,\mathcal{O}_{X_q} (j)) = h^i (X,\mathcal{O}_{X} (j))$ for $i =s,s+1$.
\item[$(e)$] Assume that $H^i (X,\mathcal{O}_{X} (j))=0$ for $1 \leq i \leq n-1$ and all $j \leq -i$. Then for all $j \leq -s$,
\begin{equation*}
h^s (X_q ,\mathcal{O}_{X_q} (j))=0  \quad \mbox{and} \quad h^{s+1} (X_q ,\mathcal{O}_{X_q} (j))=h^{s+1} (X,\mathcal{O}_{X} (j))+{{-j} \choose {s}}.
\end{equation*}
\end{enumerate}
Indeed, if $\Sigma_q (X) = \emptyset$ and hence $s=-1$, then the statements are trivial. Now assume that $s \geq 0$. Since $X_q \subset \P^{r-1}$ is projectively normal and $\Lambda = \P^s$, $(a)$ follows from the exact sequence $0 \rightarrow S_{X_q} \rightarrow E \rightarrow S_{\Lambda}(-1) \rightarrow 0$. Also the statements in $(b) \sim (e)$ comes immediately by the cohomology long exact sequence induced from $(3.3)$.

Since $X_q \subset \P^{r-1}$ is not a variety of minimal degree, $\mbox{Reg}(X_q) \geq 3$ (cf. {\bf Theorem 5.1}). Also since $X_q \subset \P^{r-1}$ is $j$-normal for all $j \geq 2$ by $(3)$, its Castelnuovo-Mumford regularity is defined as follows:
\begin{equation*}
\mbox{Reg}(X_q) = \mbox{min}~\{ k \geq 3~|~ H^i (X_q,\mathcal{O}_{X_q} (k-1-i))=0 \quad \mbox{for all $i \geq 1$}\}
\end{equation*}
Similarly, the Castelnuovo-Mumford regularity of $X \subset \P^r$ is defined as follows:
\begin{equation*}
\mbox{Reg}(X) = \mbox{min}~\{ k \geq 2~|~ H^i (X,\mathcal{O}_{X} (k-1-i))=0 \quad \mbox{for all $i \geq 1$} \}
\end{equation*}
Since $(b) \sim (d)$ guarantees that
\begin{equation*}
h^i (X_q,\mathcal{O}_{X_q} (k-1-i)) = h^i (X,\mathcal{O}_{X} (k-1-i))
\end{equation*}
for all $k \geq 3$, the proof is completed.

\noindent $(6)$ If $s=-1$, then $X_q \subset \P^{r-1}$ fails to be linearly normal and hence $\mbox{depth} (X_q)=1$. Now assume that
$s \geq 0$. Then the depth of $X \subset \P^r$ and $X_q \subset \P^{r-1}$ are defined as follows, respectively:
$$ \begin{cases} \mbox{depth}(X) = \mbox{max}~\{ i+1 ~|~ i \geq 1 ~ \mbox{and} ~ \bigoplus_{j \in \Z} H^i
(X,\mathcal{O}_{X} (j))\neq 0 \} \\
\mbox{depth}(X_q) = \mbox{max}~\{ i+1  ~|~ i \geq 1~ \mbox{and} ~
\bigoplus_{j \in \Z} H^i (X_q,\mathcal{O}_{X_q} (j))\neq 0 \}
\end{cases}$$
If $s=0$, then $\bigoplus_{j \in \Z} H^1 (X_q,\mathcal{O}_{X_q} (j))\neq
0$ by $(c)$. Therefore $\mbox{depth} (X_q)=2$ which proves the desired
equality $\mbox{depth} (X_q)=\mbox{min}\{\mbox{depth}(X),s+2\}$. If $s \geq 1$, then $(b)$, $(d)$ and $(e)$ guarantee that
\begin{equation*}
\bigoplus_{j \in \Z} H^i (X_q,\mathcal{O}_{X_q} (j)) \cong
\bigoplus_{j \in \Z} H^i (X,\mathcal{O}_{X} (j))\quad \mbox{for $1 \leq i \leq s$}
\end{equation*}
and
\begin{equation*}
\bigoplus_{j \in \Z} H^{s+1} (X_q,\mathcal{O}_{X_q} (j))\neq 0.
\end{equation*}
Therefore if $\mbox{depth}(X) \leq s+1$, then $\mbox{depth}(X_q) =
\mbox{depth}(X)$ and if $\mbox{depth}(X)
\geq s+2$, then $\mbox{depth}(X_q) = s+2$.

\noindent $(7)$ Recall the following elementary fact is useful to study local properties
of a finite birational morphism of projective varieties:\\

\begin{enumerate}
\item[$(*)$] Let $A$ be an integral domain and let $K(A)$ be the quotient field of $A$. Then $A$ fails to be normal if and only if there exists a subring $B \subset K(A)$ such that $A \subset B$ and $B$ is a finite $A$-module.\\
\end{enumerate}

\noindent Now, let $x \in \mbox{Sing}(\pi)$. Then the ring $(\pi_* \mathcal{O}_{X})_x$ is a finite $\mathcal{O}_{X_q,x}$-module such that $(\pi_* \mathcal{O}_{X})_x / \mathcal{O}_{X,x} \cong \mathcal{O}_{\Lambda,x} \neq 0$ by $(3.3)$. Therefore $\mathcal{O}_{X,x}$ fails to be normal by $(*)$.

\noindent $(8)$ Recall that $\eta \subset X_q$ is said to be a Goto point if $\mbox{dim}~\mathcal{O}_{X_q,\eta} > 1$ and
$$ H^i _{\mathfrak{m}_{X_q,\eta}} (\mathcal{O}_{X_q,\eta})= \begin{cases} 0 & \mbox{if $i \neq 1, \mbox{dim}~\mathcal{O}_{X_q,\eta}$, and}\\
K(\eta) & \mbox{if $i=1$.} \end{cases}$$
In our case, $\mbox{dim}~\mathcal{O}_{X_q,\eta} > 1$ holds since we assume that $s < n -1$. Localizing the exact sequence $(\star)$ at $\eta$, we get the following exact sequence of $\mathcal{O}_{X_q,\eta}$-modules:
\begin{equation*}
0 \rightarrow \mathcal{O}_{X_q,\eta} \rightarrow (\pi _* \mathcal{O}_{X})_{\eta} \rightarrow K(\eta) \rightarrow 0
\end{equation*}
Note that $(\pi _* \mathcal{O}_{X})_{\eta}$ is a Cohen-Macaulay $\mathcal{O}_{X_q,\eta}$-module. Indeed $\mathcal{O}_{X_q,x}$ is Cohen-Macaulay for every $x \in \pi_q ^{-1} (\eta)$ since $X$ is locally Cohen-Macaulay. So the above exact sequence enables us to show that
\begin{equation*}
H^1 _{\mathfrak{m}_{X_q,\eta}} (\mathcal{O}_{X_q,\eta}) \cong K(\eta) \quad \mbox{and} \quad H^i _{\mathfrak{m}_{X_q,\eta}} (\mathcal{O}_{X_q,\eta}) =0
\end{equation*}
for all $i \neq 1,\mbox{dim}~\mathcal{O}_{X_q,\eta}$. As $\eta \in X_q$ is not an $S_2$-point, every $y \in \Lambda$ fails to be an $S_2$-point and a Cohen-Macaulay point of $X_q$.  \qed \\

\begin{lemma}\label{lem:CKP}
Let $X$ be a projective variety and let $\mathcal{L} \in \mbox{Pic}X$ be a very ample line bundle satisfying property $N_p$ for some $p \geq 1$. Then for every base point free subspace $V \subset H^0 (X,\mathcal{L})$ of codimension $t \leq p$ and the kernel $\mathcal{M}_V$ of the evaluation homomorphism $V \otimes \mathcal{O}_X \rightarrow \mathcal{L} \rightarrow 0$,
\begin{equation*}
H^1 (X,\bigwedge^{i+1} \mathcal{M}_V \otimes \mathcal{L}^{j-1}) \rightarrow \bigwedge^{i+1} V \otimes H^1 (X,\mathcal{L}^{j-1})
\end{equation*}
is injective for $0 \leq i \leq p-t$ and all $j \geq 2$.
\end{lemma}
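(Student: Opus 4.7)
The strategy is to induct on the codimension $t$. Write $\varphi^V_{i,j}$ for the map in the statement.

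The base case $t=0$ has $V=H^0(X,\mathcal{L})$, so $\mathcal{M}_V$ is the standard syzygy bundle, and the vanishing of $\ker \varphi^V_{i,j}$ for $0 \le i \le p$ and $j \ge 2$ is the cohomological content of property $N_p$: these kernels are precisely the Koszul cohomology groups $K_{i,j}(X,\mathcal{L})$ that compute the Betti numbers of the section ring, and property $N_p$ forces them to vanish in exactly this range.

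For the inductive step, assume the lemma for subspaces of codimension $t-1$ and, given a base-point free $V$ of codimension $t \ge 1$, pick $V'$ with $V \subsetneq V' \subset H^0(X,\mathcal{L})$ of codimension $t-1$. Automatically $V'$ is base-point free, and a snake-lemma comparison of the evaluation sequences for $V$ and $V'$ produces
$$0 \to \mathcal{M}_V \to \mathcal{M}_{V'} \to \mathcal{O}_X \to 0,$$
whose $(i+1)$-st exterior power gives $0 \to \bigwedge^{i+1}\mathcal{M}_V \to \bigwedge^{i+1}\mathcal{M}_{V'} \to \bigwedge^i \mathcal{M}_V \to 0$. Tensoring with $\mathcal{L}^{j-1}$ and taking cohomology yields a comparison map $\alpha : H^1(\bigwedge^{i+1}\mathcal{M}_V\otimes\mathcal{L}^{j-1}) \to H^1(\bigwedge^{i+1}\mathcal{M}_{V'}\otimes\mathcal{L}^{j-1})$ fitting into a commutative square with $\varphi^V_{i,j}$ and $\varphi^{V'}_{i,j}$. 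The right vertical arrow is the split injection $\bigwedge^{i+1}V\otimes H^1(\mathcal{L}^{j-1}) \hookrightarrow \bigwedge^{i+1}V'\otimes H^1(\mathcal{L}^{j-1})$, and by the inductive hypothesis $\varphi^{V'}_{i,j}$ is injective for $0 \le i \le p-t+1$; hence for $0 \le i \le p-t$ it suffices to prove that $\alpha$ is injective.

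From the long exact sequence attached to the three-term sheaf sequence, injectivity of $\alpha$ is equivalent to surjectivity of the restriction map $H^0(\bigwedge^{i+1}\mathcal{M}_{V'}\otimes\mathcal{L}^{j-1}) \to H^0(\bigwedge^i \mathcal{M}_V\otimes\mathcal{L}^{j-1})$. I would establish this surjectivity by chasing the Koszul presentations
$$0 \to \bigwedge^{k+1}\mathcal{M}_\ast\otimes\mathcal{L}^{j-2} \to \bigwedge^{k+1}(\ast)\otimes\mathcal{L}^{j-2} \to \bigwedge^k \mathcal{M}_\ast\otimes\mathcal{L}^{j-1} \to 0 \quad \text{for } \ast \in \{V,V'\}$$
at appropriate values of $k$, together with the three-term sequence above; the obstructions encountered reduce to kernels $\ker \varphi^{V'}_{i',j'}$ at bidegrees lying inside the window $0 \le i' \le p-t+1$, $j' \ge 2$ that the inductive hypothesis already kills.

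The main obstacle is precisely this bookkeeping of indices: one must choose which auxiliary Koszul presentations to invoke so that every Koszul cohomology group appearing along the chase falls inside the inductive window on $V'$. The offset $p-t$ in the conclusion is tuned so that each step of the induction costs exactly one unit of $p$, matching the single extra section acquired in passing from $V$ to $V'$; once the bidegrees line up the chase closes and the induction completes.
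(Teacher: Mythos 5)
First, note that the paper does not actually prove this lemma: it simply cites the proof of Theorem~2 of \cite{CKP} and observes that the argument there extends from very ample to base point free subsystems. Your overall strategy --- induction on the codimension $t$ via the exact sequence $0 \to \mathcal{M}_V \to \mathcal{M}_{V'} \to \mathcal{O}_X \to 0$, its exterior powers $0 \to \bigwedge^{i+1}\mathcal{M}_V \to \bigwedge^{i+1}\mathcal{M}_{V'} \to \bigwedge^{i}\mathcal{M}_V \to 0$, and the commutative square comparing $\varphi^V_{i,j}$ with $\varphi^{V'}_{i,j}$ --- is the standard one in that line of work, and your reduction is correct as far as it goes: the base case $t=0$ is the cohomological interpretation of $N_p$, and injectivity of $\varphi^V_{i,j}$ does follow from injectivity of $\varphi^{V'}_{i,j}$ together with injectivity of $\alpha$, i.e.\ surjectivity of $H^0(\bigwedge^{i+1}\mathcal{M}_{V'}\otimes\mathcal{L}^{j-1}) \to H^0(\bigwedge^{i}\mathcal{M}_{V}\otimes\mathcal{L}^{j-1})$.

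The gap is that this surjectivity is the entire content of the lemma, and the mechanism you propose for it does not close. The Koszul presentations you invoke lower the twist: expressing $H^0(\bigwedge^{k}\mathcal{M}_{*}\otimes\mathcal{L}^{j-1})$ via $0 \to \bigwedge^{k+1}\mathcal{M}_{*}\otimes\mathcal{L}^{j-2} \to \bigwedge^{k+1}(*)\otimes\mathcal{L}^{j-2} \to \bigwedge^{k}\mathcal{M}_{*}\otimes\mathcal{L}^{j-1} \to 0$ produces obstruction groups in bidegree $(k,\,j-1)$, so at the boundary case $j=2$ you land at $j'=1$, outside the inductive window $j'\geq 2$ --- and the $j'=1$ statements are genuinely false. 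Concretely, for $i=0$, $j=2$ the surjectivity you need is equivalent to $V\cdot H^0(X,\mathcal{L}) = H^0(X,\mathcal{L}^{2})$, whereas your twist-lowering chase would ask for $V\otimes H^0(X,\mathcal{O}_X) \to H^0(X,\mathcal{L})$ to be surjective, which fails because $V$ is a proper subspace; the obstruction $\ker\varphi^{V'}_{0,1}$ does not vanish. The correct argument must stay at the same twist $j$ and climb in the exterior-power index $i$ instead, which is consistent with your closing remark that each codimension step costs one unit of $p$, but contradicts the chase you describe, which spends units of $j$. As written, the key step is asserted rather than proved, and it breaks exactly at $j=2$, the case the paper relies on in diagram $(3.1)$ and Theorem~\ref{thm:main2}.
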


\begin{proof}
See the proof of Theorem 2 in Section 3 of \cite{CKP} where our lemma is shown when $V$ is very ample. But one can easily check that the proof is also available for base point free subsystems.
\end{proof}

We conclude this section by providing some examples where Theorem \ref{thm:main2} is sharp
in the sense that the condition "$p \geq 2$" cannot be weakened.

\begin{ex}\label{ex:birational}
Let $C$ be a hyperelliptic curve of genus $g \geq 2$ and let $f:C \rightarrow \P^1$ be the finite morphism of degree $2$, and $A=f^*
\mathcal{O}_{\P^1} (1)$. Then $\mathcal{L}=A^{g+1}$ defines the linearly normal embedding $C \subset \P^{g+2}$. Also the base point free subspace
\begin{equation*}
V:=f^* H^0 (\P^1,\mathcal{O}_{\P^1} (g+1)) \subset H^0
(C,\mathcal{L})
\end{equation*}
defines the morphism $\varphi_{|V|}:C \rightarrow \P^{g+1}$ which is indeed obtained by the
linear projection of $C\subset \P^{g+2}$ from a point $q \in
\P^{r+1}$ outside of $C$. Since $\varphi_{|V|}(C) \subset \P^{g+1}$
is a rational normal curve of degree $g+1$, the morphism $\pi_q : C \rightarrow \pi_q (C)$ is
not birational.
\end{ex}

\begin{ex}\label{ex:Veronesesurface}
Let $S$ be a smooth surface which admits a double covering
$f:S\rightarrow\P^2$ branched along a smooth quartic curve and let
$A=f^* \mathcal{O}_{\P^2} (1)$. Then $\mathcal{L}=A^2$ defines the linearly normal embedding $S \subset \P^6$ (e.g. Example 10.2.4 in \cite{BeS}). Also the base point free subspace
\begin{equation*}
V:=f^* H^0 (\P^2,\mathcal{O}_{\P^2} (2)) \subset H^0 (S,\mathcal{L}),
\end{equation*}
defines the morphism $\varphi_{|V|}:S \rightarrow \P^5$ which is indeed obtained by the
linear projection of $S\subset \P^6$ from a point $q \in \P^6$ outside
of $S$. Since $\varphi_{|V|}(S) \subset \P^5$ is the
Veronese surface, $\pi_q :S \rightarrow \pi_q (S)$ is not
birational.
\end{ex}

Note that $C \subset \P^{g+2}$ and $S \subset \P^6$ in the previous examples satisfy property $N_p$ if and only if $p \leq 1$ respectively by Theorem 2 in \cite{GL2} and
Theorem 1.3 in \cite{GP}. Therefore the hypothesis "$p \geq 2$" in Corollary \ref{cor:birational} cannot be weakened.

\begin{ex}\label{ex:regularity1}
Let $X \subset \P^r$ be a smooth variety which is a complete intersection of two quadratic hypersurfaces. For a general point $q \in \P^r$, $\pi_q : X \rightarrow X_q$ is birational and $X_q \subset \P^{r-1}$ is a hypersurface of degree $4$. Therefore $\mbox{Reg}(X_q)=4$. Also $\Sigma_q (X)$ is the intersection of $X$ and a hyperplane through $q$ by the double point divisor formula. In particular, $\Sigma_q (X)$ is not
a quadric in a subspace of $\P^r$
\end{ex}

\begin{ex}\label{ex:regularity2}
Let $C \subset \P^{g+2}$ be a linearly normal smooth curve of genus $g$ and of degree $2g+2$ such that $\mathcal{O}_C (1)  = \omega_C \otimes \mathcal{O}_C (D)$ for an effective divisor $D$ of degree $4$. Thus $C \subset \P^{g+2}$ satisfies property $N_2$ if and only if $p \geq 1$ by Theorem 2 in \cite{GL2} sice $D$ defines a $4$-secant $2$-plane $\Lambda$ to $C \subset \P^{g+2}$. For every $q \in \Lambda$ outside of $C$, the vanishing ideal of the projected curve $C_q = \pi_q (C) \subset \P^{g+1}$ cannot be generated by quadratic and cubic equations since $C_q$ admits a $4$-secant line.
\end{ex}

By these two examples, the hypothesis "$p \geq 2$" in Theorem \ref{thm:main2} cannot be drooped.

\section{The secant stratification}
Let $X \subset \P^r$ be an $n$-dimensional projective variety satisfying property $N_2$ such
that $r-n \geq 2$, and $q \in \P^r$ a closed point outside of $X$. Then $\Sigma_q (X)$ is either empty
or else a quadric in a subspace of $\P^r$ (Corollary \ref{cor:birational}) and the cohomological and local
properties of $X_q \subset \P^{r-1}$ can be precisely determined by $\Sigma_q (X)$ (Theorem \ref{thm:main2}). Thus
the problem to understand how the cohomological and local properties of $X_q$ behave
when $q$ runs through $\P^r$ is completely answered by classifying all possible secant loci
and decomposing geometrically the ambient space via this classification. Along this
line, we begin with defining the \textit{secant stratum} of $X$. For $-1 \leq s \leq n$, let the $s$-th
secant strata of $X$, denoted $SL_s(X)$, be the set of all closed points $q \in \P^r$ satisfying
$\mbox{dim} ~\Sigma_q (X) = s$. That is,
\begin{equation*}
SL_s (X)= \{ q \in \P^r ~|~ \mbox{dim}~\Sigma_q (X)=s \}
\end{equation*}
where $\mbox{dim}~\Sigma_q (X)=n$ if $q \in X$. Corollary \ref{cor:birational} guarantees that the $n$-th secant strata $SL_s (X)$ is $X$.
The ambient space is expressed as
\begin{equation*}
\P^r = \bigcup_{-1 \leq s \leq n} SL_s (X),
\end{equation*}
which we call the \textit{secant stratification} of $X \subset \P^r$. Recall that $\pi_q : X \rightarrow X_q$ is a singular projection if and only if $q$ is contained in the union of the tangent variety
\begin{equation*}
\mbox{Tan}~(X) = \bigcup_{x \in X} T_x X \subset \P^r
\end{equation*}
of $X$ and the secant variety
\begin{equation*}
\mbox{Sec}~(X) = \overline{\bigcup_{x_1 \neq x_2 , x_i \in X} <x_1 , x_2 >} \subset \P^r
\end{equation*}
to $X$. Thus the stratification
\begin{equation*}
\bigcup_{0 \leq s \leq n} SL_s (X)= \mbox{Tan}~(X) ~\bigcup ~\mbox{Sec}~(X)
\end{equation*}
gives a finer information on simple projections of $X$.

We begin by mentioning two basic observations:

\begin{lemma}\label{lem:positivedimension}
Let $X \subset \P^r$ be a projective variety satisfying property $N_2$ and let $q \in \P^r$ be a closed point outside of $X$. If $\mbox{dim}~\Sigma_q (X) >0$, then $q \in \mbox{Tan}X$.
\end{lemma}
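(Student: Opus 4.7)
The plan is to appeal to Corollary~\ref{cor:birational} and reduce the statement to an elementary fact about tangent spaces to quadrics. Since $\mbox{dim}~\Sigma_q(X)=s\geq 1$, in particular $\Sigma_q(X)\neq\emptyset$, so by Corollary~\ref{cor:birational} the secant locus $Q:=\Sigma_q(X)$ is a quadric in $<q,\Lambda>$ cut out scheme-theoretically as $<q,\Lambda>\cap X$. In particular $q\notin Q$, and because $q\in <q,\Lambda>\setminus X$ one has $<q,\Lambda>\not\subseteq X$, so $Q$ is a genuine hypersurface in $<q,\Lambda>$; comparing dimensions forces $<q,\Lambda>\cong\P^{s+1}$. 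Since $Q\subseteq X$, the inclusion of embedded tangent spaces $T_xQ\subseteq T_xX$ holds at every $x\in Q$, so it suffices to produce $x\in Q$ whose embedded tangent space $T_xQ\subset\P^r$ contains $q$.

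To find such a point I would split into two cases according to whether the quadric hypersurface $Q\subset <q,\Lambda>\cong\P^{s+1}$ is smooth. If $Q$ is singular, pick any $x\in\mbox{Sing}(Q)$; the defining quadratic form has vanishing gradient at $x$, so the embedded tangent space to the hypersurface $Q$ at $x$ is all of $<q,\Lambda>$, and in particular contains $q$. This case also absorbs the degenerate possibility that $Q$ is a double hyperplane, since every point of such a $Q$ is singular on the scheme. If $Q$ is smooth, let $B$ be the nondegenerate symmetric bilinear form defining $Q$, and consider the polar hyperplane $H_q=\{x~|~B(q,x)=0\}$. Nondegeneracy of $B$ makes $H_q$ a genuine hyperplane in $<q,\Lambda>$, and the condition $q\notin Q$ translates to $B(q,q)\neq 0$, so $q\notin H_q$ and $H_q\neq<q,\Lambda>$. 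Consequently $Q\cap H_q$ is a quadric hypersurface in $H_q$ of dimension $s-1\geq 0$ and is therefore nonempty. Any $x\in Q\cap H_q$ then satisfies $q\in T_xQ$ via the polarity identity $T_xQ=\{y~|~B(x,y)=0\}$ valid at smooth points of a quadric.

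In either case we have exhibited $x\in Q$ with $q\in T_xQ\subseteq T_xX$, hence $q\in\mbox{Tan}(X)$. The main obstacle I anticipate is bookkeeping: justifying that $Q$ really is codimension one in $<q,\Lambda>$ (handled by the observation $q\notin X$), that $T_xQ\subseteq T_xX$ persists at singular points of $Q$ (this uses only that $Q\subseteq X$ as schemes, giving a surjection on conormal spaces and hence the claimed inclusion), and that the polarity identity behaves well even if the characteristic subtleties of smooth quadrics enter; but the singular-case argument is uniform enough to absorb the non-reduced case, and the polarity argument for smooth quadrics is classical.
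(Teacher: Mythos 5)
Your proof is correct and follows the same route as the paper: the paper's own argument simply asserts that, since $\Sigma_q(X)$ is a positive-dimensional quadric whose span contains $q$, there is a point $z\in\Sigma_q(X)$ with $<q,z>\,\subset T_z\Sigma_q(X)\subseteq T_zX$. Your case split (a singular point of the quadric when it exists, and the polar hyperplane of $q$ when the quadric is smooth) is exactly the classical justification of that one-line assertion, which the paper leaves implicit.
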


\begin{proof}
The secant locus $\Sigma_q (X)$ is a quadric of a positive dimension in its span. Thus there is a point $z \in \Sigma_q (X)$ such that
\begin{equation*}
<q,z> \quad \subset T_z \quad \Sigma_q (X) \quad  \subset \quad T_z X
\end{equation*}
since $q$ is contained in the span of $\Sigma_q (X)$. Therefore $q \in \mbox{Tan}X$.
\end{proof}

This implies that for every $q \in \mbox{Sec}~ (X)$ outside of $\mbox{Tan}~(X)$, the secant locus of $X$ at $q$ is the union of two simple points, or equivalently, the number of secant lines to $X$ passing through $q$ is equal to one.

\begin{proposition}\label{prop:positivity}
Let $X \subset \P^r$ be a projective variety which satisfies property $N_2$, $L = \mathcal{O}_{X} (1)$, and $q \in \P^r$ a closed point outside of $X$ such that $\Sigma_q (X) \neq \emptyset$.\\
$(1)$ If $L \otimes A^{-2}$ is nef for an ample line bundle $A \in \mbox{Pic}X$, then $\Sigma_q (X)$ is either a finite scheme of length $2$ or else a smooth plane conic curve. \\
$(2)$ If $L \otimes A^{-3}$ is nef for an ample line bundle $A \in \mbox{Pic}X$, then $\Sigma_q (X)$ is a finite scheme of length $2$. In this case, the secant stratification of $X$ is
\begin{equation*}
\P^r = SL_{-1} (X) \cup SL_0 (X) \cup SL_n (X)
\end{equation*}
where $SL_0 (X) = (\mbox{Sec}X \cup \mbox{Tan}X)\setminus X$ and $SL_n (X) = X$.
\end{proposition}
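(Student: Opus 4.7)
The strategy is to combine the description of $\Sigma_q(X)$ as a quadric hypersurface from Corollary~\ref{cor:birational} with the nef hypotheses, which forbid $X$ from containing curves of small degree, and then to invoke a short classification of which quadrics contain no line.

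First, I would translate the nef hypotheses into numerical restrictions on integral curves $C \subset X$. If $L \otimes A^{-2}$ is nef with $A$ ample, then $(L-2A)\cdot C \geq 0$ gives $L \cdot C \geq 2\,A \cdot C \geq 2$, so $X$ contains no line. If $L \otimes A^{-3}$ is nef, the analogous estimate yields $L \cdot C \geq 3$, so $X$ contains no curve of degree $\leq 2$, and in particular no conic, whether smooth, singular, reducible, or non-reduced.

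Next, by Corollary~\ref{cor:birational}, if $s = \dim \Sigma_q(X)$, then $\Sigma_q(X)$ is a quadric hypersurface in a linear $\P^{s+1}$. A case analysis shows that such a quadric contains a line in every situation except the following two: $s=0$ (a length $2$ scheme in $\P^1$, with no line involved) or $s=1$ with $\Sigma_q(X)$ a smooth plane conic. Indeed, for $s \geq 2$ every quadric contains a line — smooth quadrics of dimension $\geq 2$ admit rulings, quadric cones contain generators through the vertex, and reducible or non-reduced quadrics contain a hyperplane component of dimension $\geq 2$ that in turn contains lines — while every singular plane conic is a pair of lines or a double line. Since $\Sigma_q(X) \subset X$ inherits the line-freeness of $X$, this classification gives part (1); for part (2), the additional absence of conics in $X$ rules out the smooth-conic case, forcing $\Sigma_q(X)$ to be a length $2$ scheme.

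For the secant stratification in (2), by definition $SL_n(X) = X$, and for $q \notin X$ the only possible values of $\dim \Sigma_q(X)$ are $-1$ and $0$, giving the three-piece decomposition. To identify $SL_0(X) = (\mbox{Sec}(X) \cup \mbox{Tan}(X)) \setminus X$, I would note that a length $2$ subscheme $\Sigma_q(X)$ is either supported on two distinct points of $X$ — so $q$ lies on an honest secant and $q \in \mbox{Sec}(X)$ — or supported at a single point $z$ with tangent direction toward $q$, placing $q \in T_z X \subset \mbox{Tan}(X)$; conversely, any $q \in (\mbox{Sec}(X) \cup \mbox{Tan}(X)) \setminus X$ produces a length $2$ subscheme in $\Sigma_q(X)$ (using a limit argument for the closure points of $\mbox{Sec}(X)$ that are not on an honest secant, which land in $\mbox{Tan}(X)$). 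The main obstacle I anticipate is the quadric-classification step: checking uniformly across the smooth, singular, reducible, and non-reduced strata that every quadric hypersurface of dimension $\geq 2$ contains a line, and that the only line-free conic is a smooth one. This is classical but warrants a careful case-by-case check.
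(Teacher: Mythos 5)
Your proposal is correct and follows essentially the same route as the paper: Corollary~\ref{cor:birational} identifies $\Sigma_q(X)$ as a quadric in a linear subspace, the nef hypotheses exclude lines (resp.\ lines and conics) from $X$ via the intersection-number estimate, and the classification of quadrics forces $\Sigma_q(X)$ to be a length-$2$ scheme or a smooth conic (resp.\ a length-$2$ scheme). The paper's proof is merely a compressed version of your case analysis, asserting directly that a quadric of dimension $\geq 1$ contains a line or is a smooth plane conic and that one of dimension $\geq 2$ always contains a line.
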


\begin{proof}
Since $\Sigma_q (X)$ is a quadric in a subspace of $\P^r$, $X$ has a line or a smooth plane conic curve if $\mbox{dim}~\Sigma_q (X) \geq 1$, and it has a line if $\mbox{dim}~\Sigma_q (X) \geq 2$.\\
$(1)$ Since $L \otimes A^{-2}$ is nef, $X \subset \P^r$ contains no lines. Therefore $\mbox{dim}~\Sigma_P (X) \leq 1$ and the equality holds if and only if $\Sigma_q (X)$ is a smooth conic curve. \\
$(2)$ Since $L \otimes A^{-3}$ is nef, $X \subset \P^r$ contains no lines and no smooth plane conic curves. Therefore $\mbox{dim}~\Sigma_q (X)=0$.
\end{proof}

From now on, we assume that $K$ is an algebraically closed field of characteristic zero. The remaining part of this section is devoted to obtain the secant stratification of Veronese embeddings and Segre embeddings.

\subsection{Veronese embedding} For $n \geq 2$ and $d \geq 2$, let
\begin{equation*}
X = \nu_d (\P^n) \subset \P^N , \quad N= {{n+d} \choose {n}}-1,
\end{equation*}
be the $d$-uple Veronese embedding of $\P^n$. Note that Theorem \ref{thm:main2} can be applied to $X$ since it satisfies property $N_d$ by Theorem 2.2 in \cite{Gr}.

The secant loci of $X$ are as follows:

\begin{theorem}\label{thm:Veronese}
Under the situation just stated, let $q \in \mbox{Sec}~(X)$ be a closed point outside of $X$. \\
$(1)$ When $d=2$, $\Sigma_q (X)$ is a smooth plane conic curve. Therefore $\mbox{Sec}~(X) = \mbox{Tan}~(X)$ and $\mbox{dim}~\mbox{Tan}~(X)=2n-1$. The secant stratification of $X$ is
\begin{equation*}
\P^N = SL_{-1} (X) \cup SL_{1} (X) \cup X
\end{equation*}
where $SL_{1} (X) = \mbox{Sec}~(X) \setminus X$ and $SL_n (X) = X$.\\
$(2)$ When $d \geq 3$,$\Sigma_q (X)$ is the union of distinct two points if $q \in \mbox{Sec}~(X) \setminus \mbox{Tan}~(X)$ and a double point if $q \in \mbox{Tan}~(X)$. Then the secant stratification of $X$ is
\begin{equation*}
\P^N = SL_{-1} (X) \cup SL_{0} (X) \cup X
\end{equation*}
where $SL_{0} (X) = \mbox{Sec}~(X) \setminus X$ and $SL_n (X) = X$.
\end{theorem}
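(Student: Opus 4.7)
My plan is to work in the coordinate-free identification $\P^N = \P(\mbox{Sym}^2 V)$ with $V = K^{n+1}$, so that $X = \nu_2(\P^n)$ appears as the locus of rank-one symmetric tensors and $\mbox{Sec}(X)$ as the locus of rank-at-most-two ones. For part $(1)$, fix $q \in \mbox{Sec}(X) \setminus X$; then $q$ corresponds (up to scalar) to a symmetric matrix of rank exactly two, and I write $W = \mbox{Im}(q) \subset V$, a two-dimensional subspace on which $q|_W$ is nondegenerate. The central claim is
\[
\Sigma_q(X) \;=\; \nu_2(\P(W)) \;\subset\; \P(\mbox{Sym}^2 W) \cong \P^2,
\]
which is a smooth plane conic. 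For $\supseteq$, fix $[v] \in \P(W)$ and expand $\det(q - \lambda vv^T)$ in a basis of $W$: using $\det(vv^T)=0$, this equals $\det(q)\,(1 - \lambda \cdot v^T q^{-1} v)$, a polynomial of degree at most one in $\lambda$ that never vanishes identically. If $v^T q^{-1} v \neq 0$, the unique root $\lambda_0$ yields a rank-one matrix $q - \lambda_0 vv^T$, so $q$ lies on an honest chord through $[vv^T]$. In the ``isotropic'' case $v^T q^{-1} v = 0$ the determinant is the nonzero constant $\det(q)$, so no honest chord exists; but then a direct calculation (plugging $v$ as a first basis vector) shows that $q$ lies in the affine tangent space at $vv^T$, i.e.\ the line $\langle q, [vv^T]\rangle$ is tangent to $X$ at $[vv^T]$, and $[vv^T] \in \Sigma_q(X)$ regardless. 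For $\subseteq$, any $[vv^T] \in \Sigma_q(X)$ forces $q = \alpha vv^T + \beta ww^T$ (or its tangent-line limit $w = v$), hence $v \in \mbox{Im}(q) = W$.

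Having shown that $\Sigma_q(X)$ is a conic of positive dimension, Lemma \ref{lem:positivedimension} gives $q \in \mbox{Tan}(X)$; together with $X \subseteq \mbox{Tan}(X) \subseteq \mbox{Sec}(X)$, this forces $\mbox{Sec}(X) = \mbox{Tan}(X)$. The asserted dimension then follows from Terracini's lemma applied to generic $[vv^T], [ww^T] \in X$ (equivalently, from the classical description of $\mbox{Sec}(X)$ as the determinantal locus of symmetric matrices of rank at most two), and the secant stratification is read off immediately. For part $(2)$, the Veronese line bundle $\mathcal{L} = \mathcal{O}_{\P^n}(d)$ satisfies $\mathcal{L} \otimes \mathcal{O}_{\P^n}(1)^{-3} = \mathcal{O}_{\P^n}(d-3)$, which is nef for $d \geq 3$, so Proposition \ref{prop:positivity}$(2)$ applies directly and gives that each nonempty $\Sigma_q(X)$ is a zero-dimensional scheme of length two. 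To distinguish the two structures: if $q \in \mbox{Tan}(X) \setminus X$, the tangent line through $q$ meets $X$ at a unique point with multiplicity two, so $\Sigma_q(X)$ is a double point; if $q \in \mbox{Sec}(X) \setminus \mbox{Tan}(X)$, the honest chord through $q$ meets $X$ at two distinct reduced points (a $q$ in both would yield $\Sigma_q(X)$ of length at least three, contradicting the length-two bound). The decomposition $\P^N = SL_{-1}(X) \cup SL_0(X) \cup X$ is then immediate.

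The main technical obstacle is the identification $\Sigma_q(X) = \nu_2(\P(W))$ in part $(1)$, specifically the isotropic case, where $\det(q - \lambda vv^T)$ is the nonzero constant $\det(q)$ and one must verify by a direct tangent-space computation that the line through $q$ and $[vv^T]$ is tangent to $X$ at $[vv^T]$, so that $[vv^T]$ still lies in $\Sigma_q(X)$. Once this is handled, the equality $\mbox{Sec}(X) = \mbox{Tan}(X)$ and the dimension count in part $(1)$, as well as all of part $(2)$, are essentially formal consequences of Lemma \ref{lem:positivedimension} and Proposition \ref{prop:positivity}.
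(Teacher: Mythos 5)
Your argument is correct in substance but takes a more explicit route than the paper for part $(1)$. The paper never identifies $\Sigma_q(X)$ globally: it first invokes Proposition \ref{prop:positivity}.(1) (with $A=\mathcal{O}_{\P^n}(1)$, so that $L\otimes A^{-2}$ is trivial, hence nef) to conclude that $\Sigma_q(X)$ is either a length-two scheme or a smooth plane conic, and then merely exhibits a conic inside $\Sigma_q(X)$ by pulling back a line $l'\subset\P^n$ --- either the line corresponding to a tangent secant $l\subset T_xX$ under $T_xX\cong T_z\P^n$, or the line $\langle z,w\rangle$ joining the preimages of the two points of an honest chord --- and observing that $q$ lies in the plane $\langle\nu_2(l')\rangle$. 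Your rank-one-update computation on $W=\mathrm{Im}(q)$ proves the stronger equality $\Sigma_q(X)=\nu_2(\P(W))$ directly, with the isotropic case supplying the tangency; this is a complete alternative and makes Proposition \ref{prop:positivity}.(1) unnecessary for the upper bound. Part $(2)$ is handled the same way in both proofs (the paper cites Proposition \ref{prop:positivity}.(2) and leaves the reduced/non-reduced dichotomy implicit; your explicit argument for it is the remark following Lemma \ref{lem:positivedimension}).

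The one place your proposal does not deliver the stated conclusion is the dimension claim in part $(1)$. You defer to Terracini's lemma, equivalently to the identification of $\mathrm{Sec}(X)$ with the rank-$\leq 2$ locus of symmetric $(n+1)\times(n+1)$ matrices; but both of these yield dimension $2n$, not $2n-1$. Indeed the affine tangent spaces $v\cdot V$ and $w\cdot V$ at two general points meet exactly in the line spanned by $v\cdot w$, so their span has affine dimension $2(n+1)-1=2n+1$ and $\dim\mathrm{Sec}(X)=2n$; determinantally, the rank-$\leq 2$ locus is a $\P^2$-bundle over the Grassmannian of $2$-planes in $V$, of dimension $2(n-1)+2=2n$ (for $n=2$ it is the cubic hypersurface $\det=0$ in $\P^5$, of dimension $4$). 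The same count via your own parametrization --- a $2(n-1)$-dimensional family of conics $\nu_2(\P(W))$, each spanning a $\P^2$ --- again gives $2n$. So you cannot ``read off'' the value $2n-1$ from the tools you cite; they produce $2n$, and the printed value $2n-1$ appears to be off by one. You should state this discrepancy explicitly rather than assert that your method yields the asserted number.
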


\begin{proof}
$(1)$ Proposition \ref{prop:positivity}.(1) guarantees that the dimension of $\Sigma_q (X)$ is at most one. So it suffices to show that $\Sigma_q (X)$ contains a smooth plane conic curve. Let $l \subset \P^N$ be a secant line to $X$ which passes through $q$.

When $l \subset T_x X$ for some $x \in X$, let $z \in \P^n$ be such that $\nu_2 (z) =x$. Then by the isomorphism $T_x X \cong T_z \P^n$, $l$ comes from a line $l' \subset \P^n$ which passes through $z$. Thus $C := \nu_2 (l' )$ is a smooth plane conic curve which passes through $x$ such that $T_x C = l$. In particular, $q$ is contained in the plane spanned by $C$ and hence $C \subset \Sigma_q (X)$.

When $X \cap l$ consists of two distinct points $x$ and $y$, let $z$ and $w$ be two closed points in $\P^n$ such that $\nu_2 (z) = x$ and $\nu_2 (w) = y$. Then the line $l' = <z,w>$ maps to a smooth plane conic curve $C$ by $\nu_2$. Therefore $q$ is contained in the plane spanned by $C$ and hence $C \subset \Sigma_q (X)$.\\
$(2)$ The assertion comes immediately by Proposition \ref{prop:positivity}.(2).
\end{proof}

\subsection{Segre embedding} For $a \geq 1$ and $b \geq 2$, let
\begin{equation*}
X = \sigma (\P^a \times \P^b) \subset \P^N, \quad N=ab+a+b,
\end{equation*}
be the Segre variety. When $a=1$, $X$ is a smooth rational normal scroll and hence
it satisfies property $N_p$ for all $p \geq 1$. Also for $a, b \geq 2$, it is shown by A. Lascoux\cite{L} and P. Pragcz and J. Weyman\cite{PW} that $X$ satisfies property $N_p$ if and only if $p \leq 3$. 

The secant loci of $X$ are as follows:

\begin{theorem}\label{thm:Segre}
Under the situation just stated, let $q \in \mbox{Sec}~(X)$ be a closed point outside of $X$. Then $\Sigma_q (X)$ is a smooth quadric surface. Therefore $\mbox{Sec}~(X) = \mbox{Tan}~(X)$ and $\mbox{dim}~\mbox{Tan}~(X)=2(a+b)-1$. The secant stratification of $X$ is
\begin{equation*}
\P^N = SL_{-1} (X) \cup SL_{2} (X) \cup SL_{a+b} (X)
\end{equation*}
where $SL_{2} (X) = \mbox{Sec}~(X) \setminus X$ and $SL_{a+b} (X)=X$.
\end{theorem}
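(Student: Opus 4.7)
My plan is to use the matrix interpretation of the Segre variety. Identify $\P^N = \P(\mathrm{Mat}_{(a+1)\times(b+1)}(K))$ so that $X$ is the projective variety of rank-$1$ matrices, and $\mathrm{Sec}(X)$ is the variety of matrices of rank $\leq 2$. For $q \in \mathrm{Sec}(X) \setminus X$ write $q = [M]$ with $\mathrm{rank}(M)=2$, and let $L_1 \subset K^{a+1}$ be the column space of $M$ and $L_2 \subset K^{b+1}$ be its row space; both are $2$-dimensional. The key object is $Q := \sigma(\P L_1 \times \P L_2) \subset X$, a smooth quadric surface lying in the $\P^3 = \P(L_1 \otimes L_2) \subset \P^N$. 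Since $M \in L_1 \otimes L_2$ but has rank $2$, we have $q \in \P(L_1 \otimes L_2) \setminus Q$.

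I then claim $\Sigma_q (X) = Q$, and prove this by two inclusions. For $Q \subset \Sigma_q (X)$: any $p \in Q$ is connected to $q$ by a line inside the ambient $\P^3$ of $Q$; since $Q$ is a smooth quadric surface in that $\P^3$ and $q \notin Q$, this line meets $Q$ either in a second point or tangentially at $p$, hence is a secant line to $X$ through $q$, so $p \in \Sigma_q(X)$. For $\Sigma_q(X) \subset Q$: if $p = [u v^T] \in \Sigma_q(X)$, then either $q$ lies on a proper secant $\langle p, p'\rangle$ with $p' = [u' v'^T] \in X$, giving $M \sim \lambda u v^T + \mu u' v'^T$, or $q$ lies on the tangent line at $p$, giving $M \sim \lambda u v^T + \mu(u \xi^T + \eta v^T)$ for some $\xi \in K^{b+1}$, $\eta \in K^{a+1}$. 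In both cases the column span of $M$ is contained in $\mathrm{span}(u, u')$ or $\mathrm{span}(u, \eta)$, and since $\mathrm{rank}(M) = 2$ this span must equal $L_1$, forcing $u \in L_1$; symmetrically $v \in L_2$, so $p \in Q$. Combined with Corollary~\ref{cor:birational} (which ensures $\Sigma_q(X)$ is a quadric), this identifies $\Sigma_q(X)$ with the smooth quadric surface $Q$.

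Finally I would derive the remaining assertions. Because $\dim \Sigma_q(X) = 2 > 0$ for every $q \in \mathrm{Sec}(X) \setminus X$, Lemma~\ref{lem:positivedimension} gives $q \in \mathrm{Tan}(X)$; together with the trivial inclusion $\mathrm{Tan}(X) \subset \mathrm{Sec}(X)$, this yields $\mathrm{Sec}(X) = \mathrm{Tan}(X)$. For the dimension, I would use the incidence variety $I \subset \P^N \times X \times X$ of triples $(q, x_1, x_2)$ with $x_1 \neq x_2$ collinear with $q$; it has dimension $2(a+b)+1$, and over a general $q \in \mathrm{Sec}(X)$ the fiber is the family of collinear pairs on the smooth quadric $\Sigma_q(X) \cong \P^1 \times \P^1$, which is $2$-dimensional, so $\dim \mathrm{Sec}(X) = 2(a+b)-1$. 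The stratification $\P^N = SL_{-1}(X) \cup SL_2(X) \cup SL_{a+b}(X)$ is then immediate from the convention $SL_{a+b}(X)=X$, the identification $SL_2(X) = \mathrm{Sec}(X)\setminus X$ just proved, and $SL_{-1}(X) = \P^N \setminus \mathrm{Sec}(X)$.

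The main technical obstacle will be the inclusion $\Sigma_q(X) \subset Q$ in the tangential regime: one must treat limits of secants (i.e., tangent lines to $X$ at $p$) and verify that they also force $[uv^T]$ to lie in $L_1 \otimes L_2$. An alternative would be to argue on the level of scheme structure, noting that $\Sigma_q(X)$ is a quadric (hence cut out by a single quadratic form on its span) that contains the spanning quadric $Q \subset \P^3$; any strict enlargement would produce lines on $X$ not accounted for by the bidegree-$(1,0)$ or $(0,1)$ rulings of the Segre variety, contradicting the classification of linear subspaces of $X$.
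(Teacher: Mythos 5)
Your proposal is correct, and it takes a genuinely different route from the paper. The paper does not use the matrix model at all: it starts from Corollary \ref{cor:birational} (so $\Sigma_q(X)$ is a quadric of rank $k+1$ in some $\P^m$), produces the smooth quadric $l_1\times l_2$ inside $\Sigma_q(X)$ by intersecting the two rulings through a point of tangency (this is its \emph{Step 1--2}, which also proves $\mbox{Sec}(X)=\mbox{Tan}(X)$ directly), and then pins down $(m,k)=(3,3)$ by eliminating all other quadrics: reducible ones and cones are excluded because every linear subspace of the Segre lies in a ruling, and quadrics of dimension $\geq 3$ are excluded by a Picard-group argument ($\mbox{Pic}$ generated by $\mathcal{O}(1)$ is incompatible with the restrictions of $f^*\mathcal{O}_{\P^a}(1)$ and $g^*\mathcal{O}_{\P^b}(1)$). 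You instead identify $\mbox{Sec}(X)$ with the rank-$\leq 2$ determinantal locus and compute the entry locus outright as $\sigma(\P L_1\times\P L_2)$ from the column and row spaces of the rank-$2$ matrix $M$; the inclusion $\Sigma_q(X)\subset Q$ falls out of the observation that the column (resp.\ row) space of any rank-$2$ combination $\lambda uv^T+\mu u'v'^T$ or $\lambda uv^T + u\xi^T+\eta v^T$ forces $u\in L_1$ (resp.\ $v\in L_2$), which already covers the tangential case you flag as the ``main obstacle'' --- so that worry, and the vaguer alternative in your last paragraph, are unnecessary. Your argument is more elementary and self-contained (it doesn't even need Corollary \ref{cor:birational} to identify the locus, only to confirm the scheme structure), and it yields the explicit description $\Sigma_q(X)=\P(\mbox{col}\,M)\times\P(\mbox{row}\,M)$; the paper's argument is less explicit but is the one that illustrates the general strategy of the section, namely classifying which quadrics can sit inside a given $N_2$-variety. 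The only ingredients you take for granted --- that $\mbox{Sec}(X)$ is exactly the rank-$\leq 2$ locus, and the incidence-variety count giving $\dim\mbox{Sec}(X)=2(a+b)-1$ --- are standard and match what the paper uses implicitly.
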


\begin{proof}
By Corollary \ref{cor:birational}, $\Sigma_q (X)$ is a quadric in a subspace $\P^m \subset \P^{ab+a+b}$. So we may
assume that $\Sigma_q (X) \subset \P^m$ is defined by $X_0 ^2 + \cdots + X_k ^2$ for some $k \geq 0$. In this situation
we need to show that $m=k=3$. Let $f : X \rightarrow \P^a$ and $g : X \rightarrow \P^b$ be natural
projection morphisms. For $\lambda \in \P^a$ and $\mu \in \P^b$, $f^{-1} (\lambda) \cong \P^b$ and $g^{-1} (\mu) \cong \P^a$ are
called rulings of $X$. Recall that any linear subspace $\Lambda \subset X$ is contained in a ruling of $X$. Also
distinct two rulings are disjoint or they meet at one point.

The proof will proceed in several steps.\\

\textit{Step 1.} We first show that $\mbox{Sec}~(X) = \mbox{Tan}~(X)$. Let $q \in \mbox{Sec}~(X)$ be a closed point outside of $X$ such that $q \in <x_1,x_2>$ where $x_i=(\lambda_i,\mu_i) \in X$ and $x_1 \neq x_2$. Since $q \notin X$, $\lambda_1 \neq \lambda_2$ and $\mu_1 \neq \mu_2$. For $y  = (\lambda_1,\mu_2)$, let
\begin{equation*}
l_1 = <x_1,y> \subset f^{-1} (\lambda_1) \quad \mbox{and} \quad l_2 = <x_2,y> \subset g^{-1}(\mu_2).
\end{equation*}
Then $Q_{x_1,x_2}=l_1 \times l_2 \subset X$ is a smooth quadratic surface and the three dimensional linear space $<Q_{x_1,x_2}>$ contains $q$. Thus $Q_{x_1,x_2} \subset \Sigma_q (X)$ and $q \in \mbox{Tan}~(X)$. This concludes the proof that $\mbox{Sec}~(X) = \mbox{Tan}~(X)$.\\

\textit{Step 2.} Let $q \in \mbox{Sec}~(X)$ be a closed point outside of $X$. By \textit{Step 1}, we may assume that there exists a closed point $x=(\lambda,\mu) \in X$ such that the line $l = <q,x>$ is tangential to $X$ at $x$. Let $\Lambda_1 = f^{-1} (\lambda)$ and $\Lambda_2 = g^{-1} (\mu)$ be the rulings over $\lambda$ and $\mu$, respectively. Then $l_1 = \Lambda_1 \cap <q,\Lambda_2>$ and $l_2 = \Lambda_2 \cap <q,\Lambda_1>$
are lines passing through $x$ and $l_1 \times l_2 \subset X$ is a smooth quadric surface in the linear subspace $<l_1 , l_2 > = \P^3 \subset \P^N$. Observe that $<l_1 , l_2 >$ is equal to $<q,\Lambda_1 > \cap <q , \Lambda_2 >$. Therefore $q \in < l_1 , l_2 >$ and hence $l_1 \times l_2 \subset \Sigma_q (X)$. This completes the proof that $\Sigma_q (X)$ is not contained in a ruling of $X$ and $m \geq 3$ with equality if and only if $\Sigma_q (X) = l_1 \times l_2$.\\
 
\textit{Step 3.} If $\Sigma_q (X)$ is reducible(i.e. $k=0$ or $1$), then $\Sigma_q (X)$
should be contained in a ruling of $X$ since $m \geq 3$. This contradicts to \textit{Step 2}. Thus $k \geq 2$ and
$\Sigma_q (X)$ is irreducible.\\

\textit{Step 4.} Assume that $k \geq 4$. Then $\mbox{Pic}~\Sigma_q (X)$ is generated by
$\mathcal{O}_{\Sigma_q (X)} (1)$. Note that $f ^* \mathcal{O}_{\P^a} (1) |_{\Sigma_q (X)}$ and $g ^*
\mathcal{O}_{\P^b} (1)|_{\Sigma_q (X)}$ are globally generated and nontrivial since $\Sigma_q (X)$ is not contained in a ruling of $X$. Therefore
\begin{equation*}
f ^* \mathcal{O}_{\P^a} (1) |_{\Sigma_q (X)} =
\mathcal{O}_{\Sigma_q (X)} (u) ~ \mbox{and} ~ g ^*
\mathcal{O}_{\P^b} (1)|_{\Sigma_q
(X)}=\mathcal{O}_{\Sigma_q (X)} (v)
\end{equation*}
for some $u,v \geq 1$. Then $\mathcal{O}_{\Sigma_q (X)} (1) = \{f^* \mathcal{O}_{\P^a} (1)+ g^* \mathcal{O}_{\P^b} (1)\}|_{\Sigma_q (X)} =
\mathcal{O}_{\Sigma_q (X)} (u+v) $ which is a contradiction. This concludes that $2 \leq k \leq 3$.\\

\textit{Step 5.} Assume that $m-k \geq 1$ and hence $\Sigma_q (X)$ is singular. Let $\Delta$ be the vertex
of $\Sigma_q (X)$. Thus $\Delta$ is a linear space of dimension $m-k-1$. Also $\Sigma_q (X)$ has a smooth
quadric $Q \subset \P^k$ such that $\Sigma_q (X)=\mbox{Join} (\Delta,Q)$. For each $z \in
Q$, $<\Delta,z>$ is contained in a ruling $R_z$. If $m-k \geq 2$ and hence $\Delta$ has a positive dimension,
then $\Sigma_q (X)$ is contained in a ruling since $\Delta \subset R_z$ for every $z \in Q$. This contradicts to
\textit{Step 1}. Therefore $m=k$ or $m=k+1$ and $k=2$ or $3$. By \textit{Step 2}, $(k,m)$ should be $(3,3)$ or $(3,4)$. If $(k,m)=(3,4)$, then $\Delta=\{y\}$ is a point and $Q$ is covered by two families of lines $\{L_{\alpha}\}_{\alpha \in \P^1}$ and $\{M_{\beta}\}_{\beta \in \P^1}$. Fix $\alpha \in \P^1$  and let
$R$ be the ruling which contains $<y,L_{\alpha}>$. Then $<y,M_{\beta}> \subset R$ for every $\beta$ since $<y,L_{\alpha}
\cap M_{\beta}> \subset R$. Therefore $\Sigma_q (X) \subset R$ which contradicts to \textit{Step 2}.\\

\noindent By \textit{Step 1} $\sim$ \textit{Step 5}, it is proved that $(m,k)=(3,3)$ and $\Sigma_q (X) = l_1
\times l_2$. Since $\mbox{dim}~\Sigma_q (X)=2$ for every $q \in \mbox{Tan}~(X)$ outside of $X$, $\mbox{dim}~\mbox{Tan}~(X)=2(a+b)-1$.
\end{proof}

\section{Varieties of low degree}
As an application of Theorem \ref{thm:main2}, this section is devoted to reprove some results in \cite{BS} and \cite{HSV} on cohomological structure of varieties of almost minimal degree.

It is well-known that every nondegenerate irreducible projective variety $X \subset \P^r$ satisfies the condition
\begin{equation*}
\mbox{deg}(X) = \mbox{codim}(X,\P^r)+k
\end{equation*}
for some $k \geq 1$. A classical problem in algebraic geometry is to classify and to find a
structure theory of projective varieties that have small $k$ values.

Varieties with $k=1$ are called \textit{varieties of minimal degree}, which were completely classified more than one hundred years ago by P. Del Pezzo and E. Bertini (cf. \cite{EH}, \cite{Fuj}, \cite{Ha}): A variety $X$ of minimal degree is either a quadric hypersurface, (a cone over) the Veronese surface in $\P^5$, or a rational normal scroll. In particular, these varieties are arithmetically Cohen-Macaulay. Also varieties of minimal degree were
characterized cohomologically and homologically:

\begin{theorem}[\cite{EG} and \cite{EGHP}]\label{thm:characterization}
Let $X \subset \P^r$ be a nondegenerate irreducible projective variety. Then the followings are equivalent:
\begin{enumerate}
\item[$(i)$] $X$ is a variety of minimal degree.
\item[$(ii)$] $X$ is $2$-regular.
\item[$(iii)$] $X$ satisfies property $N_{2,p}$ for $p = \mbox{codim}(X,\P^r)$.
\end{enumerate}
\end{theorem}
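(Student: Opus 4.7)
The plan is to prove the equivalences in the order $(ii) \Rightarrow (iii)$, $(i) \Rightarrow (ii)$, $(ii) \Rightarrow (i)$, and finally $(iii) \Rightarrow (ii)$. The first implication is immediate from the definitions in $(2.3)$ and $(2.4)$, since $2$-regularity says $\beta_{i,j}=0$ for all $i$ and all $j\geq 2$, which is a fortiori property $N_{2,p}$ for every $p$, in particular for $p=\mathrm{codim}(X,\P^r)$.

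For $(i) \Rightarrow (ii)$, I would use the Del Pezzo--Bertini classification: a variety of minimal degree is a linear space, a quadric hypersurface, the Veronese surface in $\P^5$, a rational normal scroll, or a cone over one of these. In each case an explicit Eagon--Northcott or Koszul resolution of $I_X$ shows that it is $2$-linear of length $\mathrm{codim}(X,\P^r)$, giving $2$-regularity. An alternative, resolution-free route is cohomological: varieties of minimal degree are arithmetically Cohen--Macaulay, so $H^i(X,\mathcal{O}_X(j))=0$ for $1\leq i\leq n-1$ and all $j$, while linear normality gives the vanishing $H^1(\P^r,\mathcal{I}_X(1))=0$; combining these with Mumford's criterion recalled in $(2.4)$ yields $\mathrm{Reg}(X)\leq 2$.

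For $(ii) \Rightarrow (i)$, I would use the standard general linear section technique: take a generic linear subspace $L\subset \P^r$ of codimension $n=\dim X$, so that $\Gamma := X\cap L$ is a reduced $0$-dimensional scheme in $L\cong\P^{r-n}$ of degree $\deg X$. Since $2$-regularity is preserved under generic hyperplane section, $\Gamma\subset \P^{r-n}$ is also $2$-regular; by the general position theorem (in characteristic zero), $\Gamma$ consists of $\deg X$ points in linearly general position. For points in linearly general position, $2$-regularity forces $|\Gamma|\leq \dim L+1=r-n+1=\mathrm{codim}(X,\P^r)+1$. Combined with the trivial lower bound $\deg X\geq \mathrm{codim}(X,\P^r)+1$ for nondegenerate irreducible varieties, equality holds, proving $(i)$.

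The main obstacle is $(iii) \Rightarrow (ii)$, which amounts to showing that if the linear strand of the minimal free resolution of $I_X$ already has length $c:=\mathrm{codim}(X,\P^r)$, then there are no nonlinear syzygies at any step, and in particular the resolution terminates by step $c$. I would again argue by general hyperplane section, verifying that property $N_{2,c}$ descends to the general linear section $X\cap H$ (by a standard Koszul cohomology computation showing that cutting by a non-zero-divisor linear form preserves $\beta_{i,j}=0$ for $j\geq 2$ up to step $c$), and then reduce to the case $\dim X=0$: a finite nondegenerate scheme of codimension $c$ in $\P^c$ satisfying $N_{2,c}$. Here one shows directly, via the Eagon--Northcott complex associated to the matrix of linear syzygies whose maximal minors vanish on $\Gamma$, that $\Gamma$ must consist of exactly $c+1$ points in linearly general position, giving a pure linear Koszul-type resolution of length $c$, hence $2$-regularity. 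The subtle point of this last step is controlling that the ``extremal'' length of the linear strand allows no room for a nonlinear Betti number to appear either within the first $c$ steps (excluded by hypothesis) or beyond, using the Auslander--Buchsbaum bound $\mathrm{pd}(R/I_X)\leq r$ together with the fact that the dual of the linear strand forces arithmetic Cohen--Macaulayness.
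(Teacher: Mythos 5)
First, a point of comparison: the paper does not prove this statement at all --- it is quoted as a known theorem of \cite{EG} and \cite{EGHP} and used as a black box --- so your attempt has to be judged on its own terms rather than against an internal argument.

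Your implications $(ii)\Rightarrow(iii)$, $(i)\Rightarrow(ii)$ and $(ii)\Rightarrow(i)$ are essentially sound. Two small remarks: in $(ii)\Rightarrow(i)$ you do not need the general position theorem, since $2$-regularity already gives $1$-normality of the generic zero-dimensional section $\Gamma\subset\P^{r-n}$, hence $\deg X=\deg\Gamma\leq r-n+1$, while nondegeneracy of $\Gamma$ gives the reverse inequality; and in your ``resolution-free'' variant of $(i)\Rightarrow(ii)$ the vanishings you invoke do not cover the top-dimensional term $H^{n+1}(\P^r,\mathcal{I}_X(1-n))\cong H^n(X,\mathcal{O}_X(1-n))$ required by Mumford's criterion, so that case must be checked separately (it holds for each item of the Del Pezzo--Bertini list, but not merely from arithmetic Cohen--Macaulayness plus linear normality).

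The genuine gap is in $(iii)\Rightarrow(ii)$, which is the real content of the theorem and the main result of \cite{EGHP}. Your reduction rests on the claim that $N_{2,c}$ descends to a general linear section of dimension zero because ``cutting by a non-zero-divisor linear form preserves $\beta_{i,j}=0$ for $j\geq2$.'' That argument controls only the Betti numbers of the Artinian-type quotient $R/(I_X+(\ell_1,\dots,\ell_n))$, not those of the \emph{saturated} ideal of the finite scheme $\Gamma=X\cap L$ in $L$: after the dimension drops, generic linear forms are at best filter-regular, the quotient need not be saturated, and saturating can create new generators and syzygies in degrees $\geq 3$. Bridging exactly this discrepancy is the point of the restriction theorem of \cite{EGHP}, which requires a genuinely different (exterior-algebra, linear-strand) argument and is not a ``standard Koszul computation.'' Your final step for finite schemes is similarly asserted rather than proved: there is no a priori ``matrix of linear syzygies whose maximal minors vanish on $\Gamma$'' attached to an ideal merely satisfying $N_{2,c}$, and the claim that ``the dual of the linear strand forces arithmetic Cohen--Macaulayness'' is unsupported. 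As written, the hard implication is named but not established; you should either import the restriction theorem of \cite{EGHP} explicitly as a lemma or supply a proof of it.
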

 
A variety $X \subset \P^r$ with $\mbox{deg}(X)=\mbox{codim}(X,\P^r)+2$ is \textit{a variety of almost minimal degree} due to M. Brodmann and P. Schenzel. 
If furthermore $X$ is linearly normal and $\Delta (X,\mathcal{O}_X (1))=g(X,\mathcal{O}_X (1))=1$, then it is called \textit{a Del Pezzo variety}. They have been extensively studied over the last thirty years (\cite{BS}, \cite{Fuj}, \cite{HSV}, \cite{P1}, \cite{P2}, etc). T. Fujita\cite{Fuj} shows that $X$ is a variety of almost minimal degree if and only if it is either a normal Del Pezzo variety or else the image of a variety $\widetilde{X} \subset \P^{r+1}$ of minimal degree via a projection from a closed point outside of $\widetilde{X}$. 

Theorem \ref{thm:main2} gives a new proof of Theorem A in \cite{HSV} and Theorem 1.3 in \cite{BS}:
 
\begin{corollary}\label{cor:almostminimal}
Let $X \subset \P^r$ be a variety of almost minimal degree such that the codimension $\mbox{codim}(X,\P^r)$ is at least two. Then\\
$(1)$ $\mbox{Reg}(X)=3$.\\
$(2)$ Assume that $X = \pi_q (\widetilde{X})$ where $\widetilde{X} \subset \P^{r+1}$ is a variety of minimal degree and $q \in \P^{r+1}$ is a closed point outside of $\widetilde{X}$. Then 
\begin{equation*}
\mbox{depth}(X)=\mbox{dim}~\Sigma_q (\widetilde{X})+2.
\end{equation*}
\end{corollary}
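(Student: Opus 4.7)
My plan is to invoke Fujita's classification of varieties of almost minimal degree, which states that $X$ is either a normal Del Pezzo variety or else the image $\pi_q(\widetilde{X})$ of an $n$-dimensional variety $\widetilde{X} \subset \P^{r+1}$ of minimal degree under a linear projection from a point $q \notin \widetilde{X}$. The second alternative is precisely the setting of Section 3, so Theorem \ref{thm:main2} applied to $\widetilde{X}$ will do most of the work; the Del Pezzo alternative is relevant only to part (1) and will be handled by a direct cohomological argument.

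For the projection case, $\widetilde{X}$ is arithmetically Cohen-Macaulay, hence projectively normal, and by Theorem \ref{thm:characterization} it satisfies property $N_{2,p}$ for $p = \mbox{codim}(\widetilde{X},\P^{r+1}) \geq 3$; so property $N_p$ itself holds for some $p \geq 2$. The ACM property also yields $H^i(\widetilde{X}, \mathcal{O}_{\widetilde{X}}(j)) = 0$ for $1 \leq i \leq n-1$ and all $j$, which is the cohomological hypothesis required in Theorem \ref{thm:main2}.(6). Since $\widetilde{X}$ is $2$-regular, part (5) of that theorem yields $\mbox{Reg}(X) = \max\{3, \mbox{Reg}(\widetilde{X})\} = 3$. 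For part (2), part (6) gives $\mbox{depth}(X) = \min\{\mbox{depth}(\widetilde{X}), s+2\}$ with $s = \dim \Sigma_q(\widetilde{X})$. Since $\mbox{depth}(\widetilde{X}) = n+1$, it remains to verify $s \leq n-1$: by Corollary \ref{cor:birational}, $\Sigma_q(\widetilde{X})$ is either empty or a quadric contained in a proper linear subspace of $\P^{r+1}$, and nondegeneracy of $\widetilde{X}$ together with $\mbox{codim}(\widetilde{X},\P^{r+1}) \geq 2$ precludes $\Sigma_q(\widetilde{X}) = \widetilde{X}$; the boundary case $s = -1$ still gives $\mbox{depth}(X) = 1 = s+2$ because $\pi_q$ is then an isomorphism onto the non-linearly-normal $X$.

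For the normal Del Pezzo case of (1), I would argue directly. Such an $X$ is ACM with $\omega_X \cong \mathcal{O}_X(1-n)$, so Serre duality gives $H^n(X, \mathcal{O}_X(j)) \cong H^0(X, \mathcal{O}_X(1-n-j))^{*}$, vanishing for $j \geq 2-n$; combined with the ACM vanishing of intermediate cohomology and the $2$-normality coming from projective normality, the characterization of $m$-regularity shows that $X$ is $3$-regular. Since $\deg(X) = \mbox{codim}(X,\P^r)+2$ rules out minimal degree, Theorem \ref{thm:characterization} rules out $2$-regularity, so $\mbox{Reg}(X) = 3$. The main obstacle throughout is bookkeeping --- carefully verifying the hypotheses of Theorem \ref{thm:main2} in the projection setting (especially the bound $s \leq n-1$) and handling the Del Pezzo alternative outside the reach of Theorem \ref{thm:main2} --- rather than any deep new idea.
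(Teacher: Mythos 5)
Your proof is correct and follows the same overall route as the paper: Fujita's classification splits $X$ into the normal Del Pezzo case and the projection case, and in the projection case Theorem \ref{thm:main2}.(5) and (6) applied to the $2$-regular, ACM variety $\widetilde{X}$ (which satisfies $N_p$ for $p=\mbox{codim}(\widetilde{X},\P^{r+1})\geq 3$) deliver both assertions. The one genuine divergence is the normal Del Pezzo sub-case of (1): the paper uses ACM-ness to pass to a general linear curve section, which is an elliptic normal curve and hence $3$-regular, so $\mbox{Reg}(X)=\mbox{Reg}(C)=3$; you instead compute the top cohomology via Serre duality from $\omega_X\cong\mathcal{O}_X(1-n)$. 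Your route works but quietly assumes the normal Del Pezzo variety is Gorenstein with that canonical sheaf, a nontrivial fact from Fujita's theory that should be cited, whereas the paper's hyperplane-section argument needs only ACM-ness (Fujita I.(3.5)) plus the classical $3$-regularity of elliptic normal curves. On the other side of the ledger, you are more careful than the paper in part (2): the published proof simply says the formula ``comes immediately'' from the depth formula, while you explicitly verify the needed inequality $s\leq n-1$ (via nondegeneracy of $\widetilde{X}$ and $\mbox{codim}(\widetilde{X},\P^{r+1})\geq 3$) so that $\min\{n+1,s+2\}=s+2$, and you check the boundary case $s=-1$ separately. Both of these are points the paper leaves implicit, and your bookkeeping is sound.
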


\begin{proof}
$(1)$ By Fujita's classification theory, $X$ is either a normal Del Pezzo variety or else the image of a variety of minimal degree via a simple projection.

A normal Del Pezzo variety is arithmetically Cohen-Macaulay(cf. I.(3.5) in \cite{Fuj}) and hence the graded Betti diagram is preserved in passing from $X \subset \P^r$ to its general linear curve section $C \subset \P^{r-n+1}$ where $n$ is the dimension of $X$. In particular, $\mbox{Reg}(X) = \mbox{Reg}(C)$. Since $C$ is an elliptic normal curve, it is $3$-regular.
 
When $X = \pi_q (\widetilde{X})$ for a variety $\widetilde{X} \subset \P^{r+1}$ of minimal degree and a closed point $q \in \P^{r+1}$ outside of $\widetilde{X}$, Theorem \ref{thm:main2}.(4) shows that $\mbox{Reg}(X)=3$ since $\widetilde{X} \subset \P^{r+1}$ is $2$-regular.\\
$(2)$ This comes immediately from the depth formula in Theorem \ref{thm:main2}.$(5)$ since $\widetilde{X}$ is arithmetically Cohen-Macaulay.
\end{proof}

Varieties $X \subset \P^r$ with $\mbox{deg}(X) = \mbox{codim}(X, Pr) + 3$ are not well-understood yet.
Along the program of "classifying low degree varieties by projections of classified
varieties", as suggested in \cite{BS}, an important class of those varieties comes from a
simple projection of varieties of almost minimal degree. Since Del Pezzo varieties of
degree at least $5$ are arithmetically Cohen-Macaulay and satisfy property $N_2$, Theorem
\ref{thm:main2} contributes to the above program as one can see in the following
 
\begin{corollary}\label{cor:DelPezzo}
Let $X \subset \P^r$ be a Del Pezzo variety of degree $d \geq 5$. For a closed point $q \in \P^r$ outside of $X$, let $s=\mbox{dim}~\Sigma_q (X)$ and $X_q = \pi_q (X)$. Then\\
$(1)$ $\Sigma_q (X)$ is a quadric in  an $(s+1)$-dimensional linear subspace of $\P^r$. Therefore $\pi_q : X \rightarrow X_q$ is birational.\\
$(2)$ $X_q \subset \P^{r-1}$ is $3$-regular and
\begin{equation*}
h^0 (\P^{r-1},\mathcal{I}_{X_q /\P^{r-1}} (2))= \frac{d(d-3)}{2}+s-r.
\end{equation*}
$(3)$ $\mbox{depth}(X_q)=s+2$. In particular, $X_q \subset \P^{r-1}$ is arithmetically Cohen-Macaulay if and only if $s=\mbox{dim}~X -1$.
\end{corollary}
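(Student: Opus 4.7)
The plan is to specialize Theorem \ref{thm:main2} and Corollary \ref{cor:birational} to $X$, using two facts recalled earlier in the paper: a Del Pezzo variety of degree $d \geq 5$ satisfies property $N_2$ and is arithmetically Cohen--Macaulay (Fujita), while $\Delta(X,\mathcal{O}_X(1)) = 1$ forces $r = n + d - 2$, so $\mbox{codim}(X,\P^r) = d-2 \geq 3$. In particular the standing hypotheses of Section 3 are satisfied.

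For (1), Corollary \ref{cor:birational} directly gives that $\Sigma_q(X)$ is a quadric in $\langle q,\Lambda\rangle$ and that $\pi_q$ is birational; the dimension claim is then automatic, since a quadric of dimension $s$ spans a linear subspace of dimension $s+1$.

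For (2), Corollary \ref{cor:almostminimal}(1) already gives $\mbox{Reg}(X) = 3$ (a Del Pezzo variety of degree $\geq 5$ is, in particular, of almost minimal degree), so Theorem \ref{thm:main2}(5) yields $\mbox{Reg}(X_q) = 3$. For the quadric count I apply Theorem \ref{thm:main2}(3), which reduces matters to computing $h^0(\P^r,\mathcal{I}_X(2))$. Since $X$ is ACM, this value equals $h^0(\P^{d-1},\mathcal{I}_C(2))$ for a general linear curve section $C$, which is an elliptic normal curve of degree $d$ in $\P^{d-1}$. Riemann--Roch on $C$ gives $h^0(C,\mathcal{O}_C(2)) = 2d$, whence
\[
h^0(\mathcal{I}_C(2)) \;=\; \binom{d+1}{2} - 2d \;=\; \frac{d(d-3)}{2},
\]
and substitution into Theorem \ref{thm:main2}(3) delivers the stated formula.

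For (3), the cohomological hypothesis of Theorem \ref{thm:main2}(6) is automatic from ACM, and $\mbox{depth}(X) = n+1$, so that formula gives $\mbox{depth}(X_q) = \min\{n+1,\,s+2\}$. The only point needing argument is $s \leq n - 1$: if instead $s = n$, then by irreducibility $\Sigma_q(X) = X$, making $X$ a quadric in a subspace of $\P^r$, which contradicts $\Delta(X,\mathcal{O}_X(1)) = 1 \neq 0$. Hence $\mbox{depth}(X_q) = s + 2$, and $X_q$ is arithmetically Cohen--Macaulay precisely when $s + 2 = \dim X_q + 1$, i.e.\ when $s = n - 1$. The only step that is not completely formal is the ACM-to-curve-section reduction in (2), since the quadric count relies on transferring $h^0(\mathcal{I}_X(2))$ down to the elliptic section; this is where I would focus the verification.
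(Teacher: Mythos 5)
Your proposal is correct and follows essentially the same route as the paper: both reduce everything to Theorem \ref{thm:main2} (and Corollary \ref{cor:birational}) after noting that $X$ is arithmetically Cohen--Macaulay, satisfies property $N_2$, and has $h^0(\P^r,\mathcal{I}_X(2))=\tfrac{d(d-3)}{2}$ via its general linear curve section, an elliptic normal curve of degree $d$. You are in fact slightly more careful than the paper in verifying $s\leq n-1$ (needed to turn $\min\{n+1,s+2\}$ into $s+2$) and in justifying the regularity claim through Corollary \ref{cor:almostminimal}; the curve-section computation you flag for verification is exactly the standard preservation of the graded Betti table under general hyperplane sections of an ACM variety, which the paper uses as well.
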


\begin{proof}
Since $X$ is arithmetically Cohen-Macaulay and its generic linear curve section is a linearly normal curve of arithmetic genus $1$, $X$ satisfies property $N_2$ and $h^0 (\P^r,\mathcal{I}_X (2))=\frac{d(d-3)}{2}$. Therefore the assertions come from Theorem \ref{thm:main2}.
\end{proof}
   
\begin{ex}\label{ex:Grassmann}
Let $X = \G (1,4) \subset \P^9$ be the $6$-dimensional Grassmannian manifold of degree $5$. For $3 \leq k \leq 6$, let $X_k \subset \P^{k+3}$ denote a smooth $k$-dimensional linear section of $X$. Then $X_k \subset \P^{k+3}$ is a Del Pezzo manifold of degree $5$. Let $q \in \P^{k+3}$ be a closed point outside of $X_k$. We claim the followings:\\

\begin{enumerate}
\item[$(a)$] $\mbox{dim}~\Sigma_q (X_k) = k-2$
\item[(b)] A minimal free resolution of the vanishing ideal $I$ of $\pi_q (X_k ) \subset \P^{k+2}$ is 
\begin{equation*}
0 \rightarrow R(-5) \rightarrow R(-4)^5 \rightarrow R(-3)^5 \rightarrow I \rightarrow 0
\end{equation*}
where $R$ is the homogeneous coordinate ring of $\P^{k+2}$.
\end{enumerate}

Indeed Theorem \ref{thm:main2}.(2) implies that
\begin{equation*}
(5.1) \quad \quad h^0 (\P^{k+2},\mathcal{I}_{\pi_q (X_k )} (2))= 2+s-k , \quad s= \mbox{dim}~\Sigma_q (X_k),
\end{equation*}
since $X_k \subset \P^{k+3}$ is cut out by $5$ quadrics. This shows that $s \geq k-2$. On the other hand, note that the Picard group of $X_k$ is generated by $\mathcal{O}_{X_k}(1)$. Thus $X_k$ has no divisors of degree $2$. This shows that $\mbox{dim}~\Sigma_q (X_k) \leq k-2$ since $\Sigma_q (X_k )$ is a quadric in a subspace of $\P^{k+3}$. This completes the proof of $(a)$.

Since it is shown that $\mbox{dim}~\Sigma_q (X_k ) = k-2$, there is no quadric equations in $I$ by the formula $(5.1)$. Also $\mbox{Reg}(\pi_q (X_k ))=3$ and $\mbox{depth} =k$ by Corollary \ref{cor:DelPezzo}. Thus a minimal free resolution of $I$ is of the form
\begin{equation*}
0 \rightarrow R(-5)^{\beta_{3,2}} \rightarrow R(-4)^{\beta_{2,2}} \rightarrow R(-3)^{\beta_{1,2}} \rightarrow I \rightarrow 0.
\end{equation*}
Recall that the graded Betti diagram is preserved in passing from $\pi_q (X_k )$ to its general
linear curve section $C \subset \P^3$. Since
\begin{equation*}
g (X_k , \mathcal{O}_{X_k} (1)) =  g (\pi_q (X_k) , \mathcal{O}_{\pi_q (X_k)} (1)) = 1
\end{equation*}
by Theorem \ref{thm:main2}, $C \subset \P^3$ is an isomorphic projection of an elliptic normal curve $\widetilde{C} \subset \P^4$ of degree $5$. Thus one can compute the Betti numbers $\beta_{1,2}$, $\beta_{2,2}$ and $\beta_{3,2}$.

The secant stratification of $X_k \subset \P^{k+3}$ is 
\begin{equation*}
\P^{k+3} = SL_{k-2} (X_k ) \cup SL_k (X_k )
\end{equation*}
by $(a)$ where $SL_{k-2} (X_k ) = \P^{k+3} \setminus X_k$ and $SL_k (X_k ) = X_k $. 
\end{ex}

\end{document}